\DeclareMathOperator{\topint}{int}
\newcommand{\R}{{\bf R}}
\newcommand{\Q}{{\bf Q}}
\newcommand{\Z}{{\bf Z}}
\newcommand{\s}{{\bf S}}
\newcommand{\complex}[1]{\mathsf{#1}}
\newcommand{\K}{\complex{K}}
\newcommand{\obsr}{o_{\Z_2}^r}
\newcommand{\obsd}{o_{\Z_2}^d}
\newcommand{\obstwod}{o_{\Z_2}^{2d}}
\newcommand{\delprod}[1]{{#1}^{2}_{\textrm{del}}}
\DeclareMathOperator{\supp}{supp}
\DeclareMathOperator{\st}{st}
\DeclareMathOperator{\lk}{lk}
\def\imod#1{\allowbreak\mkern10mu({\operator@font mod}\,\,#1)}
\theoremstyle{plain}
\newtheorem{theorem}{Theorem}[section]
\newtheorem{proposition}[theorem]{Proposition}
\newtheorem{lemma}[theorem]{Lemma}
\newtheorem*{lemma*}{Auxiliary Lemma}
\newtheorem{observation}[theorem]{Observation}
\newtheorem{conjecture}[theorem]{Conjecture}
\newtheorem{problem}[theorem]{Problem}
\theoremstyle{definition}
\theoremstyle{remark}
\newtheorem{claim}{Claim}
\newtheorem{remark}[theorem]{Remark}
\title{On the Embeddability of Skeleta of Spheres}
\author{Eran Nevo\footnote{Institute of Mathematics, The Hebrew
University, Jerusalem Israel, E-mail address: \texttt{
eranevo@math.huji.ac.il}} \ and Uli Wagner\footnote{Institut f\"{u}r Theoretische Informatik,
ETH Z\"{u}rich, CH-8092 Z\"{u}rich Switzerland, E-mail address: \texttt{uli@inf.ethz.ch}. Research supported by the Swiss National Science Foundation, SNF Project 200021-116741.}}
\begin{document}
\maketitle
\begin{abstract}
We consider a generalization of the van Kampen-Flores Theorem and
relate it to the long-standing $g$-conjecture for simplicial
spheres.
\end{abstract}
\section{Introduction and results}\label{Introduction}
A well-known result by van Kampen~\cite{vanKampen:KomplexeInEuklidischenRaeumen-1932, vanKampen:Berichtigung-1932} and by Flores \cite{Flores:NichtEinbettbar-1933} asserts that
\begin{theorem}\label{thmVK-F}
The $d$-skeleton of the $(2d+2)$-simplex does not embed in the $2d$-sphere.
\end{theorem}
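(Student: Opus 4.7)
\emph{Proof plan.} I would apply the classical $\Z_2$-equivariant deleted-join obstruction to embeddability. Write $K := (\Delta^{2d+2})^{(d)}$; the case $d=0$ is elementary (three points cannot embed in $S^0$), so assume $d \ge 1$. If $K$ embedded in $S^{2d}$, its image would miss a point since $\dim K = d < 2d$, so $K$ would embed in $\R^{2d}$ via some map $f$. The van Kampen construction --- sending the join-point $(1-t)x + ty \in K \ast_\Delta K$ to the normalization of $\bigl(1-2t,\,(1-t)f(x) - t f(y)\bigr) \in \R^{2d+1}$ --- would then yield a $\Z_2$-equivariant continuous map
\[
\Phi\colon K \ast_\Delta K \longrightarrow S^{2d},
\]
with the swap action on the deleted join and the antipodal action on the sphere; the formula never vanishes, since a zero would force $t = 1/2$ together with $f(x) = f(y)$, impossible for distinct $x \ne y$ lying in disjoint simplices.

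The crux is to identify the $\Z_2$-homotopy type of $L := K \ast_\Delta K$. I realize the iterated join $(S^0)^{\ast(2d+3)}$ as the boundary $\partial \lozenge$ of the $(2d+3)$-dimensional cross-polytope, with vertex set $\{v_i^\pm : i \in [2d+3]\}$; its faces are indexed by disjoint pairs $(A, B) \subseteq [2d+3]$ (encoding the $v^+$ and $v^-$ vertices used), and $\partial\lozenge \cong S^{2d+2}$ carries the $\Z_2$-action swapping $A \leftrightarrow B$. Then $L$ is the subcomplex $\{(A,B) : |A|, |B| \le d+1\}$, and the decomposition $\partial\lozenge = \overline V_+ \cup \overline V_-$, with $\overline V_+ := \{(A,B) : |B| \le d+1\}$ and $\overline V_-$ its antipode, satisfies $\overline V_+ \cap \overline V_- = L$. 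I would show that $\overline V_+$ is a shellable simplicial $(2d+2)$-ball by shelling its facets $F_B := \{v_i^+ : i \notin B\} \cup \{v_i^- : i \in B\}$ (indexed by subsets $B \subseteq [2d+3]$ with $|B| \le d+1$) in order of increasing $|B|$, breaking ties lexicographically; a direct computation shows that the restriction face of $F_B$ in this shelling is the proper subset $\{v_i^- : i \in B\} \subsetneq F_B$. A codimension-one-face count then identifies $\partial \overline V_+$ with $L$ as simplicial complexes (a codimension-one face $F_B \setminus \{v_i^-\}$ is always shared with $F_{B \setminus \{i\}}$, while $F_B \setminus \{v_i^+\}$ is shared with another facet iff $|B| \le d$), so $L$ is a simplicial $(2d+1)$-sphere; the free $\Z_2$-action makes it $\Z_2$-homotopy equivalent to $S^{2d+1}$ with the antipodal action.

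Composition with $\Phi$ would then produce a $\Z_2$-equivariant map $S^{2d+1} \to S^{2d}$, contradicting the Borsuk--Ulam theorem, completing the proof. The step I expect to require the most care is the shellability of $\overline V_\pm$ and the resulting identification of its boundary with $L$; the facet ordering above was tailored precisely to make the restriction faces and hence the shellability check transparent.
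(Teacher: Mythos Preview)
Your argument is correct and takes a genuinely different route from the paper. The paper proves Theorem~\ref{thmVK-F} via the \emph{deleted product} and the van~Kampen obstruction $\obstwod$: it exhibits the explicit witness cycle $\omega=\sum\{\{\sigma,\tau\}: \sigma,\tau\subseteq V,\ |\sigma|=|\tau|=d+1,\ \sigma\cap\tau=\emptyset\}$ in $C_{2d}(\delprod{\K}/\Z_2;\Z_2)$ and evaluates the intersection cochain $\varphi_f$ on it using the moment-curve map. You instead use the \emph{deleted join} and Borsuk--Ulam: you identify $K\ast_\Delta K$ with an explicit simplicial $(2d+1)$-sphere inside $\partial\lozenge^{2d+3}$ via a shelling of the ball $\overline V_+$, and derive the contradiction from the nonexistence of a $\Z_2$-map to $S^{2d}$. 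Your shelling check is clean (indeed the lexicographic tiebreak is not even needed: any ordering by increasing $|B|$ works, since a face of $F_B$ lies in an earlier facet iff it omits some $v_i^-$ with $i\in B$), and the boundary identification with $L$ is straightforward.

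Two small remarks. First, your last step asserts that $L$ is ``$\Z_2$-homotopy equivalent to $S^{2d+1}$ with the antipodal action''; what you actually need, and what follows immediately, is that $L$ is a $2d$-connected free $\Z_2$-space, so by the standard connectivity bound $\mathrm{ind}_{\Z_2}(L)\ge 2d+1$ there is no $\Z_2$-map $L\to S^{2d}$. (Alternatively, note that the swap action on $\partial\lozenge^{2d+3}\subset\R^{2d+3}$ is literally $x\mapsto -x$, so the inclusion $L\hookrightarrow S^{2d+2}$ is equivariant for the antipodal action; combined with $2d$-connectivity this also yields a $\Z_2$-map $S^{2d+1}\to L$ by elementary obstruction theory.) Second, and more to the point of this paper: your approach is arguably more elegant for this single statement, but the paper's cochain-level formulation is not incidental---the explicit witness cycle $\omega$ and the intersection representative $\varphi_f$ are precisely the data that get transported through bistellar moves in Section~\ref{sec:bistellar}, which is the engine behind Theorem~\ref{thmMissingFacePL}. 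The deleted-join picture does not obviously lend itself to that inductive analysis.
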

The special case $d=1$, which says that the complete graph on 5 vertices is not planar, is particularly well-known. We remark that the proof of the theorem, which we review below, guarantees something stronger: for any continuous map from the $d$-skeleton of the $(2d+2)$-simplex into the $2d$-sphere $\s^{2d}$, there are two vertex-disjoint faces whose images intersect.
\medskip

Our main result is the following generalization of the van
Kampen-Flores Theorem to \emph{piecewise linear} (PL)
spheres (i.e. those that can be obtained by finitely many
\emph{bistellar moves}, also called \emph{Pachner moves}, from the boundary of a
simplex):
\begin{theorem}\label{thmMissingFacePL}
Let $\complex{S}_{\leq d}$ be the $d$-skeleton of a piecewise
linear 2d-sphere and let $M$ be a \emph{missing} $d$-face of
$\complex{S}_{\leq d}$, i.e. $\dim M=d$, $M\notin
\complex{S}_{\leq d}$ and the boundary $\partial M$ is contained
in $\complex{S}_{\leq d}$. Then the union $\K:=\complex{S}_{\leq
d} \cup\{M\}$ does not embed in the $2d$-sphere.
\end{theorem}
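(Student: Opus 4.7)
My plan is to apply $\Z_2$-equivariant obstruction theory in the spirit of the classical van Kampen--Flores argument. Suppose for contradiction that $\K = \complex{S}_{\leq d} \cup \{M\}$ embeds in $\s^{2d}$. Then the simplicial deleted join $\K^{*2}_{\Delta}$ admits a $\Z_2$-equivariant continuous map to $\s^{2d}$ (with the antipodal action), by the standard Sarkaria / Bajm\'oczy--B\'ar\'any principle. It therefore suffices to prove that the $\Z_2$-index of $\K^{*2}_{\Delta}$ is at least $2d+1$, i.e., to exhibit a non-trivial $\Z_2$-equivariant class of dimension $2d+1$ obstructing such a map.

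The new geometric ingredient, compared to the classical theorem, is the PL $2d$-sphere structure on $\complex{S}$. Since $\tilde H_{d-1}(\complex{S}_{\leq d};\Z_2)=0$, the $(d-1)$-cycle $\partial M$ bounds a $d$-chain $C$ in $\complex{S}_{\leq d}$, so $Z := M + C$ is a non-trivial $d$-cycle in $\K$ (non-trivial because $\dim\K = d$). Using $Z$ together with the fundamental class $[\complex{S}]\in H_{2d}(\complex{S};\Z_2)$, I would build an equivariant $(2d+1)$-cycle $\Gamma$ in $\K^{*2}_{\Delta}$. Schematically, $\Gamma$ is a $\Z_2$-symmetrized sum of cells $\sigma \uplus \tau$ where $(\sigma,\tau)$ ranges over disjoint pairs appearing in a ``doubling'' of $Z$ guided by $[\complex{S}]$. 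The non-triviality of $[\Gamma]$ in $H_{2d+1}^{\Z_2}(\K^{*2}_{\Delta};\Z_2)$ would then be verified by pairing $\Gamma$ with an equivariant test cocycle detecting the cell $M$; the evaluation reduces to an intersection count of $Z$ with its image under the $\Z_2$-action, an intersection that, by construction, picks up the missing face $M$.

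The principal obstacle is the standard \emph{diagonal problem}: the na\"ive construction from $Z\otimes Z$ produces terms $\sigma\otimes \sigma$ that are not cells of the deleted join, so one cannot directly push forward. Bypassing this requires a chain-level refinement --- either by first passing to a stellar subdivision of $\complex{S}$ (which preserves the PL $2d$-sphere structure and leaves $M$ as a missing $d$-face, while creating enough combinatorial room to arrange supports of bounding chains to lie in general position), or by a shuffle-type construction that cancels the diagonal contributions at the chain level, in the spirit of the Alexander--Whitney diagonal used in equivariant cohomology. A further subtlety is independence from the choice of the bounding chain $C$: two such choices differ by a $d$-cycle in $\complex{S}_{\leq d}$, and the resulting change in $\Gamma$ must be a boundary, which should follow from the $2d$-dimensional structure (Poincar\'e duality) of $\complex{S}$.

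Once $\Gamma$ is verified to represent a non-trivial equivariant class, its push-forward along the putative equivariant map to $\s^{2d}$ with the antipodal action would land in $H_{2d+1}^{\Z_2}(\s^{2d};\Z_2)=0$ (since the equivariant cohomology of the antipodal $\s^{2d}$ is concentrated in degrees $\leq 2d$), yielding the desired contradiction.
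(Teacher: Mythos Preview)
Your proposal is not a proof but a plan with an admitted gap: the cycle $\Gamma$ is described only ``schematically,'' and you yourself flag the diagonal problem as the ``principal obstacle'' without resolving it. Neither suggested workaround is actually carried out, and neither is obviously viable --- after a stellar subdivision the bounding chain $C$ still has no reason to avoid the simplices of $M$, and an Alexander--Whitney-type shuffle produces a chain in $C_\ast(\K)\otimes C_\ast(\K)$, not in the deleted join. More tellingly, nothing in your sketch makes essential use of the PL hypothesis: the bounding chain $C$ with $\partial C=\partial M$, the fundamental class $[\complex{S}]$, and Poincar\'e duality are all available for an arbitrary triangulated $2d$-sphere. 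If your outline could be completed as written it would settle Conjecture~\ref{conjMissingFace}, which is open for $d\geq 2$; that is a strong signal that the missing steps are not merely technical.

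The paper's argument is entirely different and makes the PL hypothesis do real work. By Pachner's theorem a PL $2d$-sphere is reached from $\partial\Delta^{2d+1}$ by finitely many bistellar moves, and the proof is by induction on the length of such a sequence. The inductive engine (Theorem~\ref{thm:bistellar-moves}) shows how to transport a witness cycle for $\obstwod(\K)\neq 0$ across a single bistellar move: one excises all pairs $\{\sigma,\tau\}$ with $\Delta^p\subseteq\sigma$ and then repairs the resulting boundary in two patching stages, using the local combinatorics of the move together with a ``homological coning'' operation (Lemma~\ref{lem:homological-coning}). Verifying the hypotheses of this transport theorem (Observation~\ref{obs:PLspheres-are-good}) relies on Newman's theorem on complements of PL balls in PL spheres --- a second place where PL is indispensable. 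The case in which the missing face $M$ is freshly created by the bistellar move is handled separately by an explicit van Kampen--Flores-type witness cycle built with the homological coning.
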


Our proof is based on an analysis of how the van Kampen
obstruction behaves under bistellar moves. However, we conjecture
that the condition of piecewise linearity is superfluous:
\begin{conjecture}\label{conjMissingFace}
Let $\complex{S}_{\leq d}$ be the $d$-skeleton of a triangulated
$2d$-sphere and let $M$ be a \emph{missing} $d$-face of
$\complex{S}_{\leq d}$. Then the union $\K:=\complex{S}_{\leq
d}\cup\{M\}$ does not embed in the $2d$-sphere.
\end{conjecture}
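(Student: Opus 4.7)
The plan is to try to prove the conjecture by imitating the argument for Theorem~\ref{thmMissingFacePL} at the level of singular (co)homology, so as to bypass the bistellar-move reduction, which is the one step that uses piecewise linearity. The key fact that survives the PL hypothesis is that any triangulated $2d$-sphere $S$ is a closed topological manifold, so $|S|$ carries a $\Z_2$-fundamental class and satisfies $\Z_2$-Poincar\'e--Lefschetz duality.

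Since $H_{d-1}(\complex{S}_{\leq d};\Z_2)=H_{d-1}(S;\Z_2)=0$, fix a $d$-chain $C$ in $\complex{S}_{\leq d}$ with $\partial C=\partial M$. On the deleted product $\delprod{\K}$ I would define a symmetric top-dimensional cochain $\phi$ by $\phi(\sigma,\tau)=0$ whenever $\sigma,\tau\in\complex{S}_{\leq d}$, and
\[
\phi(M,\tau)=\phi(\tau,M)=[C]\cdot[\tau]
\]
for $\tau\in\complex{S}_{\leq d}$ vertex-disjoint from $M$, where $[C]\cdot[\tau]$ is the mod-$2$ intersection number computed inside the $\Z_2$-manifold $|S|$ after a generic perturbation. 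Because $\delprod{\K}$ has top dimension $2d$, $\phi$ is automatically a cocycle; the first routine steps of the plan are to verify that $[\phi]\in H^{2d}_{\Z_2}(\delprod{\K};\Z_2)$ is independent of the choice of $C$, that it represents the mod-$2$ van Kampen obstruction of $\K$, and that it reduces to the class analysed in the proof of Theorem~\ref{thmMissingFacePL} when $S$ is PL.

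The decisive step is showing $[\phi]\neq 0$. A natural strategy is via Alexander duality: the nontrivial class $[\partial M]\in\widetilde H_{d-1}(|\partial M|;\Z_2)$ is sent under Alexander duality in $|S|$ to a nonzero class in $\widetilde H^d(|S|\setminus|\partial M|;\Z_2)$, and one would want to show that the equivariant symmetric construction taking this class into $H^{2d}_{\Z_2}(\delprod{\K};\Z_2)$ yields $[\phi]$ nontrivially. The case $d=1$ is a useful sanity check: there $\K$ is a planar graph $\complex{S}_{\leq 1}$ together with one chord $M=\{u,v\}$, and non-embeddability in $\s^2$ is immediate from the Jordan curve theorem, since the loop $M\cup C$ separates $\s^2$ and any edge $\tau$ with $[C]\cdot[\tau]=1$ cannot be drawn disjointly from $M$.

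The principal obstacle is precisely the non-vanishing of $[\phi]$. In the PL proof the bistellar reduction identifies the obstruction, after a sequence of combinatorial moves, with the classical van Kampen--Flores obstruction on $\partial\Delta^{2d+2}$; for an arbitrary triangulation no such reduction is available. It is unclear whether $\Z_2$-Poincar\'e duality on $S$ suffices by itself, or whether one needs additional input, for example a careful analysis of the complement of a hypothetical embedding $\K\hookrightarrow\s^{2d}$ to produce a combinatorial obstruction by hand. This is presumably the reason the statement is only conjectured.
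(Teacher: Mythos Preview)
This statement is labeled a \emph{conjecture} in the paper and is not proved there; only the PL case (Theorem~\ref{thmMissingFacePL}) is established, via induction on bistellar moves. So there is no proof in the paper for you to match.

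Your preliminary observations are correct: $\phi$ is automatically a cocycle in the top degree, its class does not depend on $C$, and one can check that $[\phi]=\obstwod(\K)$ by taking $f$ to be the inclusion $\|\complex{S}_{\leq d}\|\hookrightarrow\|\complex{S}\|\setminus\{p\}\cong\R^{2d}$ extended generically over $M$ and verifying that $\varphi_f-\phi$ is the coboundary of $\{M,\rho\}\mapsto E\cdot\rho$ for any $(d{+}1)$-chain $E$ bounding $f(M)-C$ in $\s^{2d}$. But the step you yourself flag as the obstacle is the entire content of the conjecture, and your Alexander-duality heuristic does not bridge it. Duality tells you only that $\tau\mapsto[C]\cdot[\tau]$ represents a nonzero class in $\widetilde H^d$ of the complement of $\|\partial M\|$ in $\|\complex{S}\|$; it does not produce a witness $2d$-cycle $\omega$ in $\delprod{\K}/\Z_2$. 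Any such $\omega$ must pair $M$ with a simplicial $d$-cycle $Z$ vertex-disjoint from $M$ with $[C]\cdot Z=1$, and then for each $\tau\in Z$ must pair $\tau$ with a $d$-cycle of the form $M+C_\tau$ (where $\partial C_\tau=\partial M$) that is vertex-disjoint from $\tau$, and all these local choices must close up consistently. Organising this recursive system of disjointness constraints is precisely what the paper's bistellar induction and the homological coning of Lemma~\ref{lem:homological-coning} (under the vanishing hypotheses~(\ref{eq:assumption-2b})) accomplish in the PL case; no substitute is known for arbitrary triangulated spheres. Your outline is a reasonable reformulation of the problem, but it contains no new idea for this decisive step, and the conjecture remains open.
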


Observe that the case where the triangulated sphere is the one
obtained from the boundary of the $(2d+1)$-simplex by a stellar
subdivision at a $d$-face is exactly Theorem \ref{thmVK-F} (the
unique missing $d$-face is the one we subdivided). We remark that
for $d=1$, the conjecture is obviously true (and is also covered
by Theorem \ref{thmMissingFacePL}, as all triangulated $2$-spheres
are PL): The graphs (1-skeleta) of triangulations of $\s^2$ are
precisely the planar simple graphs with the maximal number of
$3n-6$ edges (by Euler's relation), where $n\geq 4$ is the number
of vertices; thus, by adding a missing edge, the graph becomes
nonplanar.

One of the motivations for Conjecture~\ref{conjMissingFace} is
another conjecture, due to Kalai and
Sarkaria~\cite{Kalai:AlgebraicShifting-02}, that relates
embeddability of simplicial complexes to algebraic shifting and
that would imply the $g$-conjecture for simplicial spheres (see
\cite{McMullen:NumberFaces-71}). We explain these connections in
detail in Section~\ref{sec:g-conj} below. Theorem
\ref{thmMissingFacePL} can be seen as supporting evidence for
the Kalai-Sarkaria conjecture.

 For non-PL spheres,  the following weakening of Conjecture \ref{conjMissingFace} is true: $\K$ is not the $d$-skeleton of a triangulation of the $2d$-sphere. In fact, we can prove a bit more:

\begin{theorem}\label{thmNonSkeleton}
Let $\complex{S}$ be a triangulated $2d$-sphere and let $\complex{L}$ be an induced subcomplex of $\complex{S}$ which is homeomorphic to the $(d-1)$-sphere. Let $\complex{B}$ be a triangulated $d$-ball with boundary simplicially isomorphic to $\complex{L}$ by $g:\partial(\complex{B})\longrightarrow \complex{L}$. Then the glued complex $\complex{S}_{\leq d} \cup_g\complex{B}$ is not the $d$-skeleton of a triangulated $2d$-sphere.
\end{theorem}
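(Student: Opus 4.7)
I argue by contradiction. Suppose $\K = \complex{S}_{\leq d} \cup_g \complex{B}$ is the $d$-skeleton of some triangulated $2d$-sphere $\complex{S}'$. The central observation is that in any triangulated $2d$-sphere the link of a $(d-1)$-face is a $d$-dimensional pseudomanifold with nontrivial top homology, hence has at least $d+2$ vertices; equivalently, every $(d-1)$-face of $\complex{S}'$, and so of $\K$, lies in at least $d+2$ distinct $d$-faces of $\K$. I proceed by case analysis on the number of facets of $\complex{B}$.

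\emph{Case 1:} $\complex{B}$ has at least two $d$-faces. Because $\complex{B}$ is a connected triangulated $d$-manifold its dual graph is connected, so there exists a $(d-1)$-face $\tau$ of $\complex{B}$ shared by two facets; in particular $\tau \in \complex{B} \setminus \complex{L}$. I plan to show that $\tau$ has exactly two $d$-cofaces in $\K$, namely the two coming from $\complex{B}$, which contradicts the observation since $2 < d+2$ for $d \geq 1$. It remains to rule out $d$-cofaces in $\complex{S}_{\leq d}$, for which it suffices to show $\tau \notin \complex{S}$: if some vertex of $\tau$ is interior to $\complex{B}$ then that vertex lies outside $V(\complex{S})$ and we are done; otherwise all vertices of $\tau$ lie in $V(\complex{L})$, and then the induced hypothesis on $\complex{L}$ forces $\tau \in \complex{L}$, contradicting $\tau \notin \complex{L}$.

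\emph{Case 2:} $\complex{B}$ has a single $d$-face, so $\complex{B}$ is a $d$-simplex $M$ with $\complex{L} = \partial M$. Then the inducedness of $\complex{L}$ says $M$ is a missing $d$-face of $\complex{S}$, so $f_i(\complex{S}') = f_i(\complex{S})$ for $i < d$ while $f_d(\complex{S}') = f_d(\complex{S}) + 1$. I invoke the Dehn--Sommerville relation $h_d = h_{d+1}$ valid for every triangulated $2d$-sphere. Since $h_d$ depends linearly only on $f_{-1}, \ldots, f_{d-1}$ it is unchanged between $\complex{S}$ and $\complex{S}'$, while the coefficient of $f_d$ in $h_{d+1}$ equals $1$, giving $h_{d+1}(\complex{S}') = h_{d+1}(\complex{S}) + 1$. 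Applying $h_d = h_{d+1}$ to both spheres collapses to $0 = 1$.

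The conceptual heart of the argument is the case split: the elementary link-count handles everything generic, but the single-simplex case carries no interior $(d-1)$-face to exploit and instead demands a global obstruction. Dehn--Sommerville supplies it, because adding one $d$-face shifts $h_{d+1}$ by $+1$ while leaving $h_d$ alone and thus breaks the required symmetry. The induced hypothesis on $\complex{L}$ enters in both cases---in Case 1 to keep $\tau$ out of $\complex{S}$, and in Case 2 to guarantee that $M$ is genuinely new---so the proof ultimately turns on the same feature of induced subcomplexes in two different guises.
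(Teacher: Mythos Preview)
Your proof is correct, and it takes a genuinely different route from the paper's. The paper invokes Dancis' reconstruction theorem: a triangulated $2d$-sphere is determined by its $d$-skeleton through homological ``fill-in'' criteria. Since $\complex{S}_{\leq d}\subseteq \complex{S}'_{\leq d}$ and adding $d$-faces can only \emph{kill} the relevant homology groups $\widetilde H_{d-1}(C(\partial\sigma,\cdot)_{\leq d})$, every higher-dimensional face that Dancis' criterion places in $\complex{S}$ is also placed in $\complex{S}'$; hence $\complex{S}$ is a proper subcomplex of $\complex{S}'$, which contradicts both being $2d$-spheres.

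Your argument trades this single black-box result for two elementary ones. The link-count in Case~1 is purely local and uses nothing beyond the fact that links in a simplicial $2d$-sphere have the homology of a $d$-sphere; the Dehn--Sommerville argument in Case~2 is the natural global replacement when no interior $(d-1)$-face is available. The paper's approach is more uniform and hints at why the result is really about rigidity of the $d$-skeleton, while yours is shorter to verify from first principles and makes the role of the ``induced'' hypothesis completely transparent in both cases. Either proof would be acceptable; yours has the advantage of not requiring the reader to look up or trust Dancis' criteria.
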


We also consider the following second generalization of the van Kampen-Flores Theorem:
\begin{theorem}\label{thmHighSkel}
Let $\K$ be the $d$-skeleton of a triangulated $(2d+1)$-sphere. Then $\K$ does not embed in the $2d$-sphere.
\end{theorem}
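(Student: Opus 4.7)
Our plan is to prove that the van Kampen obstruction of $\K = \complex{S}_{\leq d}$ does not vanish. Any embedding $\K \hookrightarrow S^{2d}$ induces a $\Z_2$-equivariant map from the simplicial deleted join $\K \ast_\Delta \K$ (with the swap action) to $S^{2d-1}$ (with the antipodal action), and the primary obstruction to the existence of such a map is a class $\mathfrak{o}(\K) \in H^{2d}_{\Z_2}(\K \ast_\Delta \K; \tilde{\Z})$; non-vanishing of this class rules out embeddability of $\K$ in $S^{2d}$.

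The base case $\complex{S} = \partial \Delta^{2d+2}$ is precisely the classical van Kampen--Flores Theorem (Theorem \ref{thmVK-F}), since $\K = \complex{S}_{\leq d}$ is then the $d$-skeleton of the $(2d+2)$-simplex. For a general triangulated $(2d+1)$-sphere $\complex{S}$, we would propagate non-vanishing by comparing $\K \ast_\Delta \K$ to the ambient $\complex{S} \ast_\Delta \complex{S}$. Because $\complex{S}$ is a topological $(2d+1)$-sphere, the latter is $\Z_2$-equivariantly homotopy equivalent to $S^{2d+1}$ with the antipodal action (this can be checked directly on the minimal triangulation $\partial \Delta^{2d+2}$, realised as the equator in $\Delta^{2d+2} \ast_\Delta \Delta^{2d+2} \cong S^{2d+2}$ after removing the two antipodal top cells, and the homotopy type is invariant under subdivision). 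In particular its $\Z_2$-index is $2d+1$, one more than what we need. The plan is then to use the natural equivariant inclusion $\K \ast_\Delta \K \hookrightarrow \complex{S} \ast_\Delta \complex{S}$ and a comparison of equivariant cochain complexes to transfer non-vanishing to $\mathfrak{o}(\K)$.

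The main obstacle is that this inclusion is only $(d+1)$-equivariantly connected by a naive cellular count, since the relative cells $\sigma \ast \tau$ with $\max(\dim \sigma, \dim \tau) > d$ have join-dimension at least $d+2$. For $d \geq 2$ this is insufficient to guarantee the restriction is non-trivial in degree $2d$. To close this gap, I would exploit combinatorial Alexander duality in the $(2d+1)$-sphere $\complex{S}$: this pairs each face of dimension $>d$ with a low-dimensional dual cell, and the pairing lifts equivariantly to the deleted join, producing explicit chain-level cancellations in the relative equivariant cochain complex through degree $2d$. An alternative, more combinatorial route in the PL setting mirrors the proof of Theorem~\ref{thmMissingFacePL}: reduce by Pachner's theorem to $\partial \Delta^{2d+2}$ while tracking the obstruction through each bistellar move, and then handle the non-PL case by a topological approximation argument using the homotopy-invariance of the $\Z_2$-equivariant obstruction.
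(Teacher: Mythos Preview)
Your proposal has a genuine gap that you yourself identify but do not actually close. The inclusion $\K \ast_\Delta \K \hookrightarrow \complex{S} \ast_\Delta \complex{S}$ is indeed not highly enough connected to transfer the non-vanishing of the degree-$2d$ obstruction class, and neither of your proposed fixes constitutes a proof. The Alexander duality suggestion (``the pairing lifts equivariantly to the deleted join, producing explicit chain-level cancellations'') is a hope, not an argument; you would have to exhibit an explicit equivariant chain homotopy or at least identify which relative cells cancel and why, and this is not done. The bistellar alternative only covers the PL case, which, as the paper itself notes right after the statement, is trivial anyway (a PL $(2d+1)$-sphere is a subdivision of $\partial\Delta^{2d+2}$, so its $d$-skeleton contains a subdivision of $\Delta^{2d+2}_{\leq d}$); and ``topological approximation'' does not help in the non-PL case, since non-PL triangulations are not close to PL ones in any sense that would let you transport a simplicial obstruction.

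The paper's argument sidesteps the connectivity issue entirely by a short extension trick in the spirit of Lov\'asz--Schrijver. One first observes (Proposition~\ref{thmOddIntersection}) that for \emph{any} triangulated $n$-sphere $\complex{S}$ and any general-position map $g\colon \complex{S}\to\R^n$, some pair of disjoint faces of complementary dimension has odd intersection number; this is just the Borsuk--Ulam statement $o_{\Z_2}^n(\complex{S})\neq 0$, verified on $\partial\Delta^{n+1}$ and transported by homotopy invariance of the deleted product. Now take $n=2d+1$: if $f\colon \K\hookrightarrow \R^{2d}$ were an embedding, extend it to $g\colon \complex{S}\to\R^{2d+1}$ by pushing every point outside $\|\K\|$ into the open upper half-space. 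For any disjoint pair $\sigma,\tau$ with $\dim\sigma+\dim\tau=2d+1$, one of them lies in $\K$ and is mapped injectively into $\R^{2d}\times\{0\}$, while the other has all its points either in the upper half-space or (on its $\leq d$-dimensional faces) in the embedded image of $\K$, disjoint from $f(\sigma)$. Hence $g(\sigma)\cap g(\tau)=\emptyset$ for every such pair, contradicting the Borsuk--Ulam proposition. The point is that you never need to compute $\obstwod(\K)$ at all; you work one dimension up with $\complex{S}$ itself.
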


Note that this generalization is obvious if the triangulated $(2d+1)$-sphere is piecewise linear. In this case $\K$ is a subdivision of the boundary of the $(2d+2)$-simplex and hence contains a subdivision of its $d$-skeleton.

It is quite natural to wonder if the following combinatorial strengthening of Theorems \ref{thmHighSkel} and \ref{thmMissingFacePL} is true:
\begin{problem}
Does any $\K$ as in Theorem \ref{thmHighSkel} or in Theorem \ref{thmMissingFacePL} contain a subdivision of the $d$-skeleton of the $(2d+2)$-simplex?
\end{problem}
\noindent We suspect that the answer is \emph{no} for $d>1$, at least in case of Theorem \ref{thmMissingFacePL}.

This paper is organized as follows: in Section \ref{sec:Background} we fix the notation and give the necessary background on van Kampen's obstruction for embeddablity. In Section \ref{sec:HighSkel} we prove Theorem \ref{thmHighSkel}, in Section \ref{sec:Dancis} we prove Theorem \ref{thmNonSkeleton}, in Section \ref{sec:bistellar} we prove Theorem \ref{thmMissingFacePL} by analyzing the effect of bistellar moves on van Kampen's obstruction, in Section \ref{sec:g-conj} we relate Theorem \ref{thmMissingFacePL} to the $g$-conjecture and to the conjecture of Kalai and Sarkaria.

\paragraph{Acknowledgements.} Part of this research was conducted during several visits of the second author to the Hebrew University of Jerusalem. He would like to thank the Einstein Institute of Mathematics and the School of Computer Science and Engineering, in particular Gil Kalai and Nati Linial, for their hospitality and support. Other part of this research was conducted during a visit of the first author to ETH; he thanks Emo Welzl and the Institute of Theoretical Computer Science at ETH for their hospitality and support.

\section{Background}\label{sec:Background}
Let $\K$ be a finite abstract simplicial complex. We denote by $\|\K\|$ the underlying topological space obtained by embedding $\K$ linearly into some Euclidean space. We use the notation $\K_{\leq i}:=\{\sigma \in \K: \dim \sigma \leq i\}$ for the $i$-skeleton of $\K$ and $\K_i :=\{\sigma \in \K: \dim \sigma = i\}$; in particular, $\K_0$ is the set of vertices.

For a commutative ring $R$, we denote the reduced simplicial homology and cohomology of $\K$ with coefficients in $R$ by $\widetilde{H}_\ast(\K; R)$ and $\widetilde{H}^\ast(\K; R)$, respectively.
When working with deleted products of simplicial complexes, we will also need to consider slightly
more general complexes, namely polyhedral cell complexes, and we use the same notation for cellular (co)homology. We refer to Munkres' textbook~\cite{Munkres:AlgebraicTopology-1984} for general background on simplicial complexes and (co)homology.

\paragraph{Piecewise linear spheres and (bi)stellar moves.}

We briefly recall some basic notions and results from piecewise linear topology; our presentation follows
Lickorish's survey article \cite{Lickorish:SimplicialMoves-1999}, in which details and further references can be found. Consider two simplicial complexes $\K$ and $\K'$ linearly embedded in some Euclidean space $\R^d$. $\K'$ is called a \emph{subdivision} of $\K$ if $\|\K\|=\|\K'\|$ and each simplex of $\K'$ is fully contained in some simplex of $\K$.
A \emph{piecewise linear map} between two complexes $\K$ and $\complex{L}$ is a simplicial map from some subdivision of $\K$ to some subdivision of $\complex{L}$, and two complexes are \emph{piecewise linearly homeomorphic} if they have subdivisions that are simplicially isomorphic.

A \emph{piecewise linear $d$-ball} is a simplical complex that is piecewise linearly homeomorphic to $\Delta^d$, the complex consisting of the $d$-simplex and all its faces. A \emph{piecewise linear $d$-sphere} is a complex piecewise linearly homeomorphic to $\partial \Delta^{d+1}$. For $d\geq 5$, there are simplicial complexes whose underlying space is homeomorphic to the $d$-sphere, but which are not piecewise linear spheres.
Any PL sphere is actually a subdivision of $\partial \Delta^{d+1}$.

The preceding definitions refer to linear embeddings of simplicial complexes. The notion of piecewise linear homeomorphism, and hence piecewise linear balls and spheres, can also be defined purely combinatorially in terms of \emph{stellar} and \emph{bistellar moves}. \emph{Stellar subdivision} at a (nonempty) face $\sigma$ of a simplical complex $\K$ is the operation that removes the star $\st(\sigma,\K)$ and replaces it by $v\ast \partial  \sigma \ast \lk(\sigma, \K)$, where $v$ is a new vertex (geometrically, we place $v$ into the relative interior of $\sigma$ and subdivide $\sigma$ and all faces containing it by coning from $v$). The inverse operation is called a \emph{stellar weld}.

Any subdivision of a finite simplicial complex can be obtained by a finite sequence of stellar moves (subdivisions and welds). Consequently, two finite simplicial complexes are piecewise linearly isomorphic iff one can be obtained from the other by a finite sequence of stellar moves.

The combinatorial changes during a stellar subdivision depend on the link of the face $\sigma$. Thus, even if the dimension of the complex is fixed, there are infinitely many different such moves.

Pachner~\cite{Pachner:Konstruktionsmethoden-1987} showed that for piecewise linear balls and spheres
(and more generally, for piecewise linear closed manifolds), for each given dimension, there is a finite list of $d+1$ operations that suffice
for moving from one triangulated space to any PL-homeomorphic triangulation of it.
Suppose that $\sigma $ is a $p$-dimensional simplex in a
simplicial complex $\K$ whose link $\lk(\sigma, \K)$ is isomorphic
to $\partial \tau$ for some $q$-dimensional simplex $\tau \not \in
\K$ (i.e., $\tau$ is a missing face of $\K$). The operation of
removing $\st(\sigma)=\sigma\ast \partial \tau$ and replacing it
by $\partial \sigma \ast \tau$ is called a \emph{bistellar move}
of type $(p,q)$, also known as Pachner move in the literature.
\begin{theorem}[Pachner~\cite{Pachner:Konstruktionsmethoden-1987}]\
\label{fact:PL-bistellar}
A finite simplicial complex is a piecewise linear $d$-sphere \textup{[}resp.\ $d$-ball\textup{]} iff it can be obtained from $\partial \Delta^{d+1}$ \textup{[}resp.\ $\Delta^d$\textup{]} by a finite sequence of bistellar moves \textup{(}each of them of some type $(p,d-p)$, $0\leq p \leq d$\textup{)}.
\end{theorem}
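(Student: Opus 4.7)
The plan is to prove both directions separately. For the easier ``if'' direction, one shows that each bistellar $(p,q)$-move on a $d$-dimensional complex is a PL homeomorphism: the move replaces $\sigma*\partial\tau$ by $\partial\sigma*\tau$, and these are the two halves of the boundary of the ``missing'' $(d+1)$-simplex $\sigma*\tau$, meeting along $\partial\sigma*\partial\tau$. Both halves are PL $d$-balls, and the identity on their common boundary extends to a PL homeomorphism between them. Consequently, starting from $\partial\Delta^{d+1}$ (resp.\ $\Delta^d$), any finite sequence of bistellar moves yields a PL sphere (resp.\ ball).

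For the harder ``only if'' direction I would proceed by induction on $d$, with trivial base case $d=0$. The inductive step combines two ingredients: the Alexander--Newman theorem cited earlier in the excerpt, asserting that PL-homeomorphic complexes are connected by stellar subdivisions and welds, and the inductive hypothesis applied to the lower-dimensional links. Since the inverse of a bistellar $(p,q)$-move is a $(q,p)$-move, it suffices to show that each stellar subdivision performed on a PL $d$-ball or $d$-sphere can be simulated by a finite sequence of bistellar moves.

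To simulate the stellar subdivision at a $p$-face $\sigma$ with link $L:=\lk(\sigma,\K)$, I would first invoke the inductive hypothesis (applied to the PL $(d-p-1)$-sphere $L$) to obtain a sequence of bistellar moves inside $L$ connecting it to $\partial\tau$ for some $(d-p)$-simplex $\tau$. Each $(p',q')$-move in $L$ at a face $\alpha$ with $\lk(\alpha,L)=\partial\beta$ lifts to a $(p+p'+1,q')$-move in $\K$ at $\sigma*\alpha$, since $\lk(\sigma*\alpha,\K)=\lk(\alpha,L)=\partial\beta$. Once the link of $\sigma$ has been transformed into $\partial\tau$, the stellar subdivision at $\sigma$ can be realized by a short further sequence of bistellar moves: introducing the new vertex as a $(d,0)$-move at a top-dimensional face adjacent to $\sigma$, and then using bistellar flips to maneuver the new vertex into the intended position.

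The main obstacle is the delicate combinatorial bookkeeping: one must verify that the candidate ``missing'' simplices $\sigma*\beta$ remain missing throughout the lifted sequence so that each move is legal, and that the scheme terminates with precisely the stellar subdivision one started with, rather than something merely PL-homeomorphic to it. Pachner's original argument handles these subtleties via a carefully chosen complexity measure, and this is the technical heart of the proof.
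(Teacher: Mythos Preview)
The paper does not prove this statement at all: Theorem~\ref{fact:PL-bistellar} is quoted from Pachner~\cite{Pachner:Konstruktionsmethoden-1987} as a black-box ingredient, with no proof or sketch given. So there is no ``paper's own proof'' to compare your proposal against.

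That said, your sketch is a reasonable outline of the standard argument. The ``if'' direction is exactly right. For the ``only if'' direction, your strategy of reducing to stellar moves via Alexander--Newman and then simulating each stellar subdivision by bistellar moves, using induction on dimension through the links, is indeed the route Pachner takes. Two caveats: first, when you lift a bistellar move in $L=\lk(\sigma,\K)$ at $\alpha$ with link $\partial\beta$ to a move in $\K$ at $\sigma\ast\alpha$, the simplex that must be missing in $\K$ is $\beta$ itself, not $\sigma\ast\beta$; and the fact that $\beta\notin L$ does not by itself force $\beta\notin\K$. Second, you correctly flag that the bookkeeping (legality of every lifted move, and the endgame that inserts the new vertex and realizes exactly the stellar subdivision) is the technical crux, and you defer to Pachner for it. That is honest, but it means your write-up is a plan rather than a proof: the parts you omit are precisely the parts that carry the content.
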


We will need another fact about piecewise linear spheres (which fails in the non-PL case; for example the Alexander ``horned 2-sphere'' in $\R^3$ has the property that the unbounded component is not simply connected).
\begin{theorem}[Newman~\cite{Newman:FoundationsCombinatorialAnalysisSitus-1926}]
\label{Fact:Newman}
Let $\complex{S}$ be a piecewise linear $d$-sphere that contains a subcomplex $\complex{B}$ which is a piecewise linear $d$-ball. Then the closure $\overline{\complex{S}-\complex{B}}$ is a piecewise linear $d$-ball.
\end{theorem}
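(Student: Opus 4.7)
The plan is to induct on the minimum number of bistellar moves needed to transform $\partial\Delta^{d+1}$ into $\complex{S}$, which exists by Theorem~\ref{fact:PL-bistellar}, using regular-neighborhood theory to control the interaction of the moves with the subcomplex $\complex{B}$.

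For the base case $\complex{S}=\partial\Delta^{d+1}$, every $d$-dimensional subcomplex has the form $\complex{B}=\bigcup_{i\in J}F_i$ where $F_i=\{0,\ldots,d+1\}\setminus\{i\}$, and such a subcomplex is a PL $d$-ball if and only if $1\le |J|\le d+1$. A direct check identifies the closure of the complement with the union $\bigcup_{i\notin J}F_i$, which in turn equals the closed star of $J$ in $\partial\Delta^{d+1}$, namely the join $J\ast\partial\Delta(\{0,\ldots,d+1\}\setminus J)$. Being the join of a simplex and a simplex-boundary, this is itself a PL $d$-ball, of dimension $(|J|-1)+(d-|J|)+1=d$.

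For the inductive step, suppose $\complex{S}$ arises from $\complex{S}_0$ by a bistellar move that deletes $\st(\sigma,\complex{S}_0)=\sigma\ast\partial\tau$ and inserts $\partial\sigma\ast\tau$, and that $\complex{B}$ corresponds to a subcomplex $\complex{B}_0\subseteq\complex{S}_0$. If $\st(\sigma,\complex{S}_0)$ is contained wholly in $\complex{B}_0$ or wholly in $\overline{\complex{S}_0-\complex{B}_0}$, then the move acts internally on one side while leaving the other side unchanged: the inductive hypothesis handles the unchanged side, and Theorem~\ref{fact:PL-bistellar} ensures the modified side remains a PL ball. The nontrivial case is when $\st(\sigma,\complex{S}_0)$ straddles $\complex{B}_0$ and its closure complement.

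To handle the straddling case, my strategy is to first modify the pair $(\complex{S},\complex{B})$ by stellar subdivisions to equip $\complex{B}$ with a simplicial collar: a subcomplex $C\subseteq\overline{\complex{S}-\complex{B}}$ meeting $\complex{B}$ exactly along $\partial\complex{B}$, with $C$ PL homeomorphic to $\partial\complex{B}\times[0,1]$ and with $\complex{B}\cup C$ still PL homeomorphic to $\complex{B}$. Such a collar can be obtained as the simplicial neighborhood of $\complex{B}$ in a suitable iterated barycentric subdivision of $\complex{S}$, its PL-ball character being guaranteed by the regular-neighborhood theorem. With the collar as a buffer, any bistellar move whose star straddles $\partial\complex{B}$ can be replaced by a finite sequence of finer bistellar moves each of whose star lies on only one side of the thickened boundary, reducing to the easy cases. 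The main obstacle is the bookkeeping in this straddling reduction: making sure the collar can be arranged in a fixed subdivision and verifying that every straddling move decomposes cleanly into one-sided moves. The key input here is the PL rigidity and uniqueness of regular neighborhoods.
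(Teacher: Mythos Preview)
The paper does not prove this statement; it is quoted in the background section as a classical result of Newman, with a reference to the original 1926 paper, and is later used as a black box in the proof of Observation~\ref{obs:PLspheres-are-good}. So there is no proof in the paper to compare your proposal against.

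On its own merits, your proposal has a genuine gap in the inductive step. You write that ``$\complex{B}$ corresponds to a subcomplex $\complex{B}_0\subseteq\complex{S}_0$'', but no such correspondence is defined. The bistellar move replaces $\sigma\ast\partial\tau$ by $\partial\sigma\ast\tau$; if $\complex{B}$ contains some of the newly created faces in $\partial\sigma\ast\tau$, there is no canonical preimage $\complex{B}_0$ in $\complex{S}_0$, and no reason any candidate should again be a PL ball. Your collar argument is meant to sidestep this, but you yourself flag it as incomplete (``the main obstacle is the bookkeeping\ldots''), and it is unclear how refining $\complex{S}$ to create a collar interacts with the fixed bistellar sequence on which you are inducting: after subdividing, you are no longer one move away from $\complex{S}_0$.

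There is also a circularity concern. You invoke the regular-neighborhood theorem to obtain the collar, but the standard development of PL regular neighborhoods (as in Rourke--Sanderson or Hudson) uses Newman's theorem, or an equivalent statement, as a foundational input. Likewise, Pachner's Theorem~\ref{fact:PL-bistellar} (1987) is built on the classical PL foundations of which Newman's 1926 theorem is a cornerstone; using Pachner to prove Newman is at best anachronistic. The classical route to Newman's theorem is induction on the dimension $d$ via stellar theory and links, not induction along a bistellar sequence.
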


\paragraph{Deleted Products and the van Kampen obstruction.}

We recall the deleted product construction and the definition of the $\Z_2$-valued van Kampen obstruction.

The (twofold) \emph{deleted product} of $\K$ is defined as $$\delprod{\K}=\{\sigma \times \tau: \sigma,\tau \in \K, \sigma \cap \tau=\emptyset\}.$$ Thus, the cells of the deleted product are cartesian products of vertex-disjoint simplices of $\K$. The deleted product comes equipped with an obvious cellular action of $\Z_2$ that simply exchanges the order of factors, $\sigma \times \tau \mapsto \tau\times \sigma$; the action is \emph{free}, i.e., it does not have any fixed-points. We remark that up to $\Z_2$-equivariant homotopy equivalence, the deleted product only depends on the underlying space $\|\K\|$, not on the particular triangulation. (A \emph{$\Z_2$-equivariant map}, or simply \emph{$\Z_2$-map}, between two spaces with $\Z_2$-actions is one that commutes with the respective $\Z_2$-actions.)

It follows from the general theory of principal bundles (see, for instance, \cite{Husemoller:FibreBundles-1994} or \cite{TomDieck:TransformationGroups-1987}) that there
is a $\Z_2$-map $\delprod{\K}\rightarrow \s^\infty$, which is unique up to $\Z_2$-homotopy. This induces a unique map (up to homotopy) between the quotient spaces $\delprod{\K}/\Z_2\rightarrow \R P^\infty$, and hence there is a unique homomorphism (up to isomorphism) in cohomology $H^\ast (\R P^\infty; \Z_2)\rightarrow H^\ast (\delprod{\K}/\Z_2 ; \Z_2)$. The $\Z_2$-cohomology ring of $\R P^\infty$ is isomorphic to the polynomial ring $\Z_2[x]$, and the image of the generator $x$ under
the above homomorphism is called the first \emph{Stiefel-Whitney
class} of $\delprod{K}/\Z_2$, denoted by $\varpi_1(\delprod{K}/\Z^2)$. Like the deleted product, it only depends on the underlying space $\|\K\|$.

The $r^\textrm{th}$ power $\varpi_1^r(\delprod{\K}/\Z_2) \in
H^r(\delprod{\K}/\Z_2; \Z_2)$ is called the \emph{van Kampen
obstruction} (with coefficients in $\Z_2$) for embeddability into
$\R^r$ and denoted by $\obsr(\K)$.
If $\obsr(\K)\neq 0$ then $\|\K\|$ does not embed in $\R^r$:
if $f:\|\K\| \rightarrow \R^r$ is an embedding then $(x,y)\mapsto \frac{f(x)-f(y)}{\|f(x)-f(y)\|}$ defines
a $\Z_2$-equivariant map
 $\|\delprod{\K}\| \rightarrow \s^{r-1} \subset \s^\infty$.
 Thus, on the level of cohomology,
the map $H^\ast (\R P^\infty; \Z_2)\rightarrow H^\ast (\delprod{\K}/\Z_2 ; \Z_2)$ factors through
$H^\ast (\R P^{r-1}; \Z_2)\cong \Z_2[x]/(x^r)$, so the image $\obsr(\K)$ of $x^r$ is zero.
We are mainly interested in the case $r=2d$, where $d=\dim \K$ (recall that $\K$ always linearly embeds in
$\R^{2d+1}$, by placing the vertices in general position, for instance on the moment curve).
The $\Z_2$-valued van Kampen obstruction is \emph{complete}, i.e., $\obstwod(\K)=0$ iff $\K$ embeds into $\R^{2d}$, for $d=1$ (as was shown by Hanani~\cite{Hanani:UnplattbareKurven-1934} and independently by Tutte \cite{Tutte:TowardATheoryOfCrossingNumbers-1970}, see also \cite{Sarkaria:OnedimensionalWhitneyTrick-1991} for a cohomological treatment). For $d>1$; the $\Z_2$-valued obstruction is incomplete (see \cite[Example~2.4]{Melikhov:vanKampenRelatives-2006}). We remark that there is an integer-valued variant of the van Kampen obstruction, which is a complete obstruction for $d\neq 2$ (see, e.g., \cite{Shapiro:FirstObstruction-1957}), but still incomplete for $d=2$
(see \cite{FreedmanKrushkalTeichner:vanKampenObstructionIncomplete-1994}).

\paragraph{Linking and intersection numbers.}

To compute van Kampen's obstruction, we will mostly work with a particular
representative of the cohomology class, namely the intersection cochain induced
by a map in general position (this is essentially van Kampen's original definition,
which predates the formal definition of cohomology; see
Shapiro~\cite{Shapiro:FirstObstruction-1957} for a detailed presentation with full proofs).

Let $\K$ be a finite simplicial complex
and let $f:\|\K\|\rightarrow \R^{r}$ be a continuous map \emph{in general position}, i.e., $f(\sigma^p)\cap f(\tau^q)=\emptyset$ whenever $p+q<r$ (where we write $\sigma^p$ for $\sigma \in \K$ with $\dim \sigma=p$).
Then one can define an \emph{intersection number} $f(\sigma^p)\cdot f(\tau^q)\in \Z_2$ for any pair of (oriented) simplices with $p+q=r$. We refer to Dold \cite[Chapter VII, \S 4]{Dold:LecturesAlgebraicTopology-1995} for a formal homological definition of intersection numbers; geometrically speaking, up to a small perturbation (which preserves general position), one can assume that $f$ is a smooth (or PL) map and that any two simplices $\sigma^p$ and $\tau^q$ with $p+q=r$
have only a finite number of intersection points each of which is transverse (i.e., the tangent spaces of $f(\sigma^p)$ and $f(\tau^q)$ at the intersection point are complementary subspaces of $\R^r$); then
$f(\sigma^p)\cdot f(\tau^q)$ is simply the number of intersection points modulo 2. (For integer-valued
intersection numbers of images of oriented simplices, one counts the intersections with signs, depending on the orientation of $\R^r$ induced by the orientations of the tangent spaces.)

Clearly, the $\Z_2$-valued intersection number is commutative; moreover, it has the property that
\begin{equation}
f(\partial \sigma^{p}) \cdot f(\tau^q)= f(\sigma^{p})\cdot f(\partial \tau^q), \tag{$p+q=r+1$}
\end{equation}
with the understanding that the intersection number is extended from simplices to chains by bilinearity (for $\Z$-valued intersection numbers, one has correction signs $(-1)^{pq}$ when exchanging factors, and $(-1)^p$ when applying the boundary operator on different sides). This property allows one to define the homological \emph{linking number} of (the images of) boundaries $\partial \alpha^p$ and $\partial \beta^q$ of vertex -disjoint chains with $p+q=r+1$ as the intersection number $f(\partial \alpha) \cdot f(\beta)$.

In the case $r=2d$, $d=\dim \K$, any map $f$ as above defines an \emph{intersection number cochain}
$\varphi_f \in C^{2d} (\delprod{\K}/\Z_2; \Z_2)$ by
$$\varphi( \{\sigma,\tau\}):=f(\sigma)\cdot f(\tau)=|f(\sigma)\cap f(\tau)|\imod 2.$$
Here, we identify the cells in the quotient space $\delprod{\K}/\Z_2$
with unordered pairs $\{\sigma,\tau\}$ of vertex disjoint simplices
$\sigma,\tau\in \K$, $\dim\{\sigma,\tau\}=\dim\sigma+\dim\tau$.

Since we are working with coefficients in $\Z_2$, there is an obvious bijection between $r$-chains and sets of $r$-cells, and we will freely switch back and forth between these two viewpoints.
If $\alpha$ is a chain, we write
$\supp(\alpha)=\bigcup_{\sigma \in \alpha}\sigma$ for the
\emph{support} of $\alpha$, i.e., the  union of the vertices of
simplices in $\alpha$. Moreover, we extend the unordered pair notation to chains by linearity, i.e.,
if $\alpha,\beta$ are chains in $\K$ with disjoint supports, we use $\{\alpha,\beta\}$ to denote the
chain $\sum_{\sigma \in \alpha,\tau\in \beta} \{\sigma,\tau\}$.
In particular, by definition of the boundary of cartesian products,
$$\partial \{\sigma,\tau\} =\{\partial \sigma,\tau\} +\{\sigma, \partial \tau\}.$$
It follows that a chain $\omega \in C_r(\delprod{\K}/\Z_2; \Z_2)$ is a cycle iff
for every $p$-simplex $\sigma \in \K$, the $(r-p)$-chain
$\{\tau \in \K: \{\sigma,\tau\}\in \omega\}$ is a cycle.

It turns out that the intersection number cochain $\varphi_f$ is a cocycle, and that the cohomology class $[\varphi_f] \in H^{2d}(\delprod{\K}/\Z_2; \Z_2)$ determined by this cocycle
is independent of $f$; more specifically, $[\varphi_f]=\obstwod(\K)$.

Since we consider (co)homology over a field, there is no torsion and
the cohomology class $\obstwod(\K)$ does not vanish iff there is a
\emph{witness} homology class $\omega \in H_{2d} (\delprod{\K}/\Z_2;
\Z_2)$ such that $\langle \obstwod(\K),\omega)\rangle \neq 0$, where
$\langle,\rangle$ is the natural pairing between cohomology and
homology. Thus, using intersection number representatives, it follows
that $\obstwod(\K)\neq 0$ iff there is a $2d$-cycle $\omega \in C_{2d}(\delprod{\K}/\Z_2;
\Z_2)$ such that
$$\sum_{\{\sigma,\tau\}\in \omega} | f(\sigma)\cap f(\tau)|\neq 0 \imod 2.$$

To prove Theorem \ref{thmVK-F}, take $\omega=\sum \{\{\sigma,\tau\}: V\supseteq \sigma,\tau\neq \emptyset,\ \sigma\cap \tau=\emptyset, |\sigma|=|\tau|=d+1 \}$ where $V$ is the vertex set of the $(2d+2)$-simplex, and $f$ is obtained by
locating the vertices on different points on the moment curve in $\R^{2d}$ and
and taking convex hulls for the relevant simplices.

\paragraph{Further notation.} We also extend the join notation bilinearly to chains with disjoint supports, i.e.,
$\alpha\ast \beta =\sum_{\sigma\in \alpha,\tau\in \beta} \sigma\ast
\tau$ (recall that for abstract, vertex-disjoint simplices, $\sigma \ast \tau =\sigma \cup \tau$; geometrically, we embed the two simplices linearly into skew subspaces of some euclidean space and take the convex hull of their union). Finally, for simplices $\rho, \tau$, we write $\rho \prec \tau$ if
$\rho$ is a facet of $\tau$.

\section{Warm-up: Proof of Theorem \ref{thmHighSkel}}\label{sec:HighSkel}
Theorem \ref{thmHighSkel} follows from the following theorem, inspired by
Lo\'vasz and Schrijver \cite[Theorem 1]{LovaszSchrijver:BorsukTheoremAntipodalLinks-1998},
Theorem 1, which deals with antipodal faces in convex polytopes.

\begin{proposition}[Borsuk-Ulam Theorem]\label{thmOddIntersection}
Let $\K$ be a triangulated $d$-sphere and let $f: \K\longrightarrow
\mathbf{R}^d$ be a continuous map in general position. Then there exist two
disjoint faces $\sigma, \tau\in \K$ such that $\dim \sigma +\dim \tau=d$
and $|f(\sigma)\cap f(\tau)|$ is odd.
\end{proposition}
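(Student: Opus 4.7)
The plan is to argue by contradiction, identifying the statement with the non-vanishing of the mod-$2$ van Kampen obstruction $\obsd(\K) = \varpi_1^d(\delprod{\K}/\Z_2)$. Suppose that $|f(\sigma)\cap f(\tau)|$ is even for every pair of vertex-disjoint simplices with $\dim \sigma + \dim \tau = d$. Define the intersection cochain $\varphi_f \in C^d(\delprod{\K}/\Z_2;\Z_2)$ by $\varphi_f(\{\sigma,\tau\}) = |f(\sigma)\cap f(\tau)| \bmod 2$; the coboundary computation recalled in Section~\ref{sec:Background} for the case $r=2d$ only uses the identity $f(\partial\alpha)\cdot f(\beta) = f(\alpha)\cdot f(\partial\beta)$ and therefore carries over verbatim to $r=d$, so $\varphi_f$ is a cocycle representing $\obsd(\K)$. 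By hypothesis $\varphi_f$ is identically zero, whence $\obsd(\K) = 0$, and it suffices to show this is impossible when $\K$ triangulates the $d$-sphere.

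To produce a witness, I would build a $\Z_2$-map from $S^d$ (with antipodal action) into the deleted product (with swap action). Fix any homeomorphism $h:S^d \to \|\K\|$ and set $\Phi(x) = (h(x), h(-x))$. Since the antipodal involution is fixed-point-free, $\Phi$ avoids the diagonal and lands in $\|\K\|^2 \setminus \Delta$, which is $\Z_2$-equivariantly homotopy equivalent to $\|\delprod{\K}\|$; composing with this equivalence yields a $\Z_2$-map $\tilde\Phi: S^d \to \|\delprod{\K}\|$. Passing to quotients, one obtains $\bar\Phi: \R P^d \to \|\delprod{\K}\|/\Z_2$, whose pullback of the free double cover $\|\delprod{\K}\| \to \|\delprod{\K}\|/\Z_2$ is by construction the nontrivial cover $S^d \to \R P^d$.

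Consequently, $\bar\Phi$ is homotopic to (a factorization of) the standard inclusion $\R P^d \hookrightarrow \R P^\infty$ through any classifying map. By naturality of the first Stiefel-Whitney class, $\bar\Phi^\ast \varpi_1(\|\delprod{\K}\|/\Z_2) = \varpi_1(\R P^d)$, and raising to the $d$-th power gives $\bar\Phi^\ast \varpi_1^d = \varpi_1^d(\R P^d)$, which is the nonzero generator of $H^d(\R P^d;\Z_2) \cong \Z_2$. In particular $\varpi_1^d(\|\delprod{\K}\|/\Z_2) = \obsd(\K) \neq 0$, contradicting the hypothesis on $f$.

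The main substantive ingredient is the classical fact $H^\ast(\R P^d;\Z_2) = \Z_2[x]/(x^{d+1})$ with $x = \varpi_1$ (which is essentially equivalent to Borsuk--Ulam itself); everything else is bookkeeping. The only delicate technical point I anticipate is verifying that the $\Z_2$-equivariant homotopy equivalence between the topological deleted product $\|\K\|^2 \setminus \Delta$ and the combinatorial deleted product $\|\delprod{\K}\|$ is compatible with the identification $[\varphi_f] = \obsd(\K)$, so that the non-vanishing of the Stiefel--Whitney class on the topological side really obstructs the combinatorial cochain from being identically zero. This compatibility is standard but should be checked carefully.
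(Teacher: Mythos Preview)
Your argument is correct and arrives at the same core fact as the paper---that $\obsd(\K)\neq 0$ whenever $\K$ triangulates $\s^d$---but by a genuinely different route. The paper establishes this by an explicit combinatorial computation: for the special triangulation $\K=\partial\Delta^{d+1}$ it takes the Schlegel-diagram map $f$ through a chosen facet $F$ and the $d$-cycle $\omega$ in $\delprod{\K}/\Z_2$ consisting of all complementary pairs $\{\sigma,\tau\}$ with $\sigma\ast\tau$ equal to the full vertex set; then $\varphi_f(\omega)=|f(F)\cap f(v)|=1$ for $v$ the vertex opposite $F$, and triangulation-independence of the obstruction transfers this to arbitrary $\K$. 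You instead build a $\Z_2$-map $\s^d\to\|\delprod{\K}\|$ out of the antipodal involution and pull back $\varpi_1^d$ to the nonzero class in $H^d(\R P^d;\Z_2)$. Your route is more conceptual and makes the Borsuk--Ulam connection transparent, at the cost of invoking the $\Z_2$-equivariant homotopy equivalence between the simplicial and topological deleted products and the ring structure of $H^\ast(\R P^d;\Z_2)$. The paper's route stays entirely within the combinatorial apparatus already set up in Section~\ref{sec:Background} and, as a bonus, hands over an explicit witness cycle---in keeping with the cycle-level manipulations used later in Section~\ref{sec:bistellar}.
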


\begin{proof}
First assume that $\K=\partial\Delta^{d+1}$, where $\Delta^i$ is the $i$-simplex. Let $f:\K\rightarrow \R^d$ be the map that produces the Schlegel diagram of $\Delta^{d+1}$ through an (arbitrary, but fixed) facet $F$, and the consider the $d$-cycle $\omega=\sum\{ \{\sigma,\tau\} : \sigma,\tau \neq \emptyset, \sigma \cap \tau=\emptyset, \sigma \ast \tau=\Delta_0^{d+1}\}$. We have $\obsd(\K)(\omega)=|f(F)\cap f(v)|=1$ where $v$ is the unique vertex in $\K$ not in $F$. Thus $\obsd(\K)\neq 0$. Since the obstruction is independent of the particular triangulation and of the map $f$, it follows that for any $\K$ and $f$ as in the theorem, there exists a $2d$-cycle $\omega$ in $C_{2d}(\delprod{\K}/\Z_2;
\Z_2)$ such that $\sum_{\{\sigma,\tau\}\in \omega}|f(\sigma)\cap f(\tau)|\neq 0 \imod{2}$. In particular, this sum contains an odd number of nonzero summands, any one of which yields a pair of faces $\sigma,\tau$ as in the conclusion of the theorem.
\end{proof}

We now prove Theorem \ref{thmHighSkel}, following the reasoning in \cite{LovaszSchrijver:BorsukTheoremAntipodalLinks-1998}. Let $\K$ be the $d$-skeleton of a triangulated $(2d+1)$-sphere. First observe that the theorem is obviously true for $d=0$. For $d>0$, embeddability of the $d$-dimensional complex $\K$ into $\s^{2d}$ is equivalent to embeddability into $\R^{2d}$. Thus, assume for a contradiction that there exists an embedding $f:\|K\|\hookrightarrow \R^{2d}$. We identify $\R^{2d}\cong \R^{2d}\times \{0\}\subseteq \R^{2d+1}$ and extend $f$ to a map $g:\complex{S}\rightarrow \R^{2d+1}$, where $\complex{S}$ is any triangulation of the $(2d+1)$-sphere with $\complex{S}_{\leq d}=\K$; the only property we will need is that we can choose this extension $g$ such that $g(\|\complex{S}\|\setminus \|\K\|)$ is contained in the open upper halfspace $\R^{2d}\times \R_{>0}$. Then, for any two disjoint faces $
\sigma,\tau \in \complex{S}$ whose dimensions add up to $2d+1$, we have $g(\sigma)\cap g(\tau)=\emptyset$, since one of the faces is of dimension $\leq d$ and the other is of dimension $\geq d+1$. This contradicts Proposition~\ref{thmOddIntersection}. $\square$

\section{Proof of Theorem \ref{thmNonSkeleton}}\label{sec:Dancis}
This theorem is inspired by the proof of Dancis \cite{Dancis:ComplexesDeterminedBySkeleta-1984} that a triangulated compact $2d$-manifold without boundary and with vanishing $d$-homology is determined by its $d$-skeleton. (The first result of this kind goes back to Perles, who considered the polytope boundary case.)

Dancis' proof is based on constructing inductively the $k$-skeleton from the $(k-1)$-skeleton according to homological criteria which tells us which $(k-1)$-dimensional subcomplexes isomorphic to $\partial \Delta^k$ we should fill with a $k$-simplex \cite[proofs of Theorems 11 and 13]{Dancis:ComplexesDeterminedBySkeleta-1984}. More specifically, for a triangulated $2d$-sphere $\complex{S}$, assume that $\complex{S}_{< k}$ is given, and that $\sigma$ is $k$-dimensional simplex with $\partial \sigma \subseteq \complex{S}_{< k}$. If $k=d+1$ then $\sigma \in \complex{S}_k$
iff $\tilde{H}_{d-1}(C(\partial \sigma,\complex{S})_{\leq d};\Z)=0$ where $C(\complex{T},\complex{S})$ is the induced subcomplex of $\complex{S}$ on the vertices not in the subcomplex $\complex{T}$. If $k>d+1$, then $\sigma \in \complex{S}_k$ iff $\tilde{H}_{i}(C(\partial \sigma,\complex{S})_{\leq k-1};\Z)=0$ for $i=2d-k+1,2d-k$.

\begin{proof}[Proof of Theorem~\ref{thmNonSkeleton}] Assume by contradiction that $\complex{S}$, $\complex{B}$, and $g$ satisfy the assumptions of the theorem and that $\K:=\complex{S}_d\cup_g \complex{B}$ is the $d$-skeleton of the triangulated $2d$-sphere $\complex{S}'$ (by Dancis result, $\complex{S}'$ is determined by $\K$). For $i<d$ we have $\complex{S}_i=\complex{S}'_i$. Moreover, we have $\complex{S}_d \subset \complex{S}'_d$. Thus, whenever the homology groups in the above criteria vanish for  $\complex{S}$, they also vanish for $\complex{S}'$. Therefore, we get that $\complex{S}$ is a proper subcomplex of $\complex{S}'$ and at the same time homeomorphic to it. This is a contradiction, for instance by Alexander-Poincar\'e duality. \end{proof}

\section{Van Kampen's Obstruction and Bistellar Moves}\label{sec:bistellar}

We consider the behaviour of the van Kampen obstruction under
bistellar moves. Let $\K$ be a $d$-dimensional simplicial complex,
and let $\complex{T}$ be an \emph{induced} subcomplex
of $\K$ that is isomorphic to
the $d$-skeleton of $\Delta^p \ast \partial \Delta^q$, for some
$p+q=2d$, and such that the open star $\{\sigma\in \K: \Delta^p\subseteq
\sigma\}\subseteq \complex{T}$. Let $\complex{L}$ be the $d$-dimensional complex obtained from
$\K$ by replacing $\complex{T}$ by the $d$-skeleton of
$\partial\Delta^p \ast \Delta^q$, which we denote by $\complex{T}'$.
Note that $\complex{T}\cap \complex{T}'$ is the $d$-skeleton of
$\partial \Delta^p \ast \partial \Delta^q$ (in particular, if $p>0$
then $\complex{T}$ and $\complex{T}'$ have the same vertex set). In
other words, we delete all $d$-faces of the form $\Delta^p\ast
\rho$, $\rho \in \partial \Delta^q$, from $\K$ and add all $d$-faces
of the form $\sigma\ast \Delta^q$, $\sigma \in
\partial\Delta^p$; here we abuse notation and use the same
symbol $\Delta^p$ to denote both a fixed $p$-dimensional simplex as
well as the simplicial complex consisting of that simplex and all
its faces, depending on the context. We say that $\complex{L}$
arises from $\K$ by a \emph{bistellar move of type $(p,q)$}. Note
that, in particular, if $p>d$, then the old complex, $\K$, is a
subcomplex of the new one, $\complex{L}$.

\paragraph{The assumptions.} We wish to show that under certain assumptions, a nonvanishing van Kampen obstruction is preserved by a bistellar move as above.
We will  work with the intersection cochain
representative $\varphi_f$ described in Section \ref{sec:Background}.
Fix a continuous map $f:\|\K\|\rightarrow \R^{2d}$ in general position.
Our first assumption is that all intersections happen ``outside of
$\complex{T}$'', i.e.,
\begin{equation}
\label{eq:assumption-1a}
\sigma \in \complex{T} \textrm{ or } \tau\in \complex{T} \quad \Longrightarrow \quad \varphi_f(\{\sigma,\tau\})=0.\tag{I.a}
\end{equation}
Similarly, we assume that there is a continuous map $g:\|\complex{L}\|\rightarrow
\R^{2d}$ in general position that agrees with $f$ on $\|\K\cap \complex{L}\|$ and such
that the corresponding intersection cochain $\varphi_g$ satisfies
\begin{equation}
\label{eq:assumption-1b}
\sigma \in \complex{T}' \textrm{ or } \tau\in \complex{T}' \quad \Longrightarrow \quad \varphi_g(\{\sigma,\tau\})=0.\tag{I.b}
\end{equation}

Our next assumption is that the homology of $\K$ ``outside of
$\complex{T}$'' is sufficiently simple. More precisely, we assume
that
\begin{equation}
\label{eq:assumption-2a} \exists v_0 \in \K_0 \setminus
\complex{T}_0=\complex{L}_0 \setminus \complex{T}'_0 \tag{II.a}
\end{equation}
and that
\begin{equation}
\label{eq:assumption-2b}
\widetilde{H}_i(\K-W;\Z_2)=0, \qquad 0\leq i \leq d-1
\tag{II.b}
\end{equation}
whenever $W=\complex{T}_0 \setminus \rho$ for some face $\rho \in \complex{T} \cap \complex{T}'$ (where $\K-W$ is obtained from $\K$ by deleting all faces containing at least one vertex in $W$).
Observe that if $W':=\complex{T}'_0 \setminus \rho$ and $W'':=(\complex{T}_0\cap \complex{T}'_0) \setminus \rho$ then $\complex{L}-W' = \K-W = \K \cap \complex{L}-W''$.

\begin{observation}
\label{obs:PLspheres-are-good} Suppose that $\K=\complex{S}_{\leq d}\cup\{M\}$ is the $d$-skeleton of a PL $2d$-sphere $\complex{S}$ plus a missing $d$-face $M$ and that $\complex{S}'$ is the $2d$-sphere obtained by a bistellar move of type $(p,q)$, i.e., by replacing a $2d$-ball $\complex{B}=
\Delta^p \ast \partial \Delta^q \subset \complex{S}$ by the $2d$-ball $\complex{B}'=\partial \Delta^p \ast \Delta^q$. Suppose furthermore that $M$ is also a missing $d$-face of $\complex{S}'$.
If we set $\complex{L}:=\complex{S}'_{\leq d}\cup\{M\}$, $\complex{T}:=\complex{B}_{\leq d}$, and $\complex{T'}:=\complex{B}'_{\leq d}$, then all assumptions \textup{(\ref{eq:assumption-1a})--(\ref{eq:assumption-2b})}
are satisfied.

\end{observation}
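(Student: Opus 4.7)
The plan is to verify each of the four assumptions for the PL bistellar-move setup, using the fact that $\complex{B}^c := \overline{\complex{S} - \complex{B}}$ is a PL $2d$-ball (by Newman's theorem) and also serves as the complementary ball to $\complex{B}'$ in $\complex{S}'$; in particular $\complex{S} \cap \complex{S}' = \complex{B}^c$ as subcomplexes, so $\K \cap \complex{L} = \complex{B}^c_{\leq d} \cup \{M\}$.

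Assumption~(\ref{eq:assumption-2a}) is easy: since $\complex{B}$ has exactly $p+q+2 = 2d+2$ vertices and the only $2d$-sphere on $2d+2$ vertices is $\partial \Delta^{2d+1}$, which has no missing $d$-face, the existence of $M$ forces $|\complex{S}_0| > 2d+2$, and so $\complex{S}_0 \setminus \complex{B}_0 \neq \emptyset$. The equality $\K_0 \setminus \complex{T}_0 = \complex{L}_0 \setminus \complex{T}'_0$ is checked by a direct case analysis on whether $p$ or $q$ vanishes (the only cases in which the bistellar move deletes or creates a vertex).

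For (\ref{eq:assumption-1a}) and (\ref{eq:assumption-1b}) I would construct $f$ and $g$ simultaneously. Fix PL homeomorphisms $\Phi: |\complex{S}| \to \s^{2d}$ and $\Phi': |\complex{S}'| \to \s^{2d}$ that restrict to one common PL homeomorphism from $|\complex{B}^c|$ onto the closed lower hemisphere (sending $|\partial \complex{B}| = |\partial \complex{B}'|$ onto the equator) and map $\complex{B}$, $\complex{B}'$ onto the upper hemisphere; such extensions exist since any PL $2d$-ball with prescribed boundary embedding onto a PL $(2d-1)$-sphere in $\s^{2d}$ extends to fill in the complementary hemispheric ball. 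Stereographically project from a generic point $N$ in the lower-hemisphere interior chosen to avoid the $d$-dimensional image $\Phi(\complex{S}_{\leq d}) \cup \Phi'(\complex{S}'_{\leq d})$. The resulting maps $f := \pi \circ \Phi|_{\complex{S}_{\leq d}}$ and $g := \pi \circ \Phi'|_{\complex{S}'_{\leq d}}$ are PL embeddings into $\R^{2d}$ that coincide on $\complex{B}^c_{\leq d}$ and send $\complex{B}$, $\complex{B}'$ onto a common closed $2d$-ball $\Sigma \subset \R^{2d}$. Since $M$ is missing from both spheres, $\partial M \subset \complex{S}_{\leq d-1} \cap \complex{S}'_{\leq d-1} = \complex{B}^c_{\leq d-1}$, so $f(\partial M) = g(\partial M)$ lies in the $(d-1)$-connected space $\R^{2d} \setminus \topint \Sigma$ and can be filled by a common PL $d$-disk $f(M) = g(M)$ there with interior in the open exterior. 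Now for $\sigma \in \complex{T}$ vertex-disjoint from $\tau \in \K$, we have $f(\sigma) \subset \Sigma$ while $f(\tau) \subset \R^{2d} \setminus \topint \Sigma$, so $f(\sigma) \cap f(\tau) \subseteq \partial \Sigma$; by injectivity of the PL embedding $f|_{\partial \complex{B}}$ and the vertex-disjointness, this intersection is $f\bigl((\sigma \cap \tau) \cap \partial \complex{B}\bigr) = \emptyset$ (interpreting $\sigma \cap M$ as the common subcomplex of $\sigma$ and $\partial M$ in the case $\tau = M$). Hence $\varphi_f(\{\sigma,\tau\}) = 0$, proving (\ref{eq:assumption-1a}); (\ref{eq:assumption-1b}) follows by the same argument with primes.

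For the main step (\ref{eq:assumption-2b}), observe first that $\complex{B}$ is an \emph{induced} subcomplex of $\complex{S}$: the $q$-simplex on $V_q$ is the missing face $\tau$ governing the bistellar move, so no simplex of $\complex{S}$ contains $V_q$, and any simplex of $\complex{S}$ with vertices in $\complex{B}_0 = V_p \cup V_q$ must therefore lie in $\Delta^p \ast \partial \Delta^q = \complex{B}$. In particular $\complex{S}[W] = \complex{B}[W]$. Writing $\rho = A \cup B$ with $A \subsetneq V_p$ and $B \subsetneq V_q$, so that $W = (V_p \setminus A) \cup (V_q \setminus B)$, the join structure of $\complex{B}$ gives
\[
\complex{B}[W] \;=\; \Delta_{V_p \setminus A} \,\ast\, (\partial \Delta^q)[V_q \setminus B],
\]
a join involving the nonempty simplex $\Delta_{V_p \setminus A}$, hence contractible. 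Alexander duality in the PL $2d$-sphere $\complex{S}$, combined with the nerve theorem applied to the good cover of the topological complement $|\complex{S}| \setminus |\complex{S} - W|$ by open vertex stars (whose nerve is precisely $\complex{S}[W]$), now yields
\[
\widetilde{H}_i(\complex{S} - W; \Z_2) \;\cong\; \widetilde{H}_{2d - 1 - i}(\complex{S}[W]; \Z_2) \;=\; 0 \quad \text{for all } i.
\]
Passing to the $d$-skeleton preserves $\widetilde{H}_i$ for $i \leq d - 1$, and if $M$ survives in $\K - W$ then attaching the $d$-cell $M$ (whose boundary already lies in $(\complex{S} - W)_{\leq d-1}$) can only annihilate classes in $\widetilde{H}_{d-1}$, never create them. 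The main obstacle, and the heart of the observation, is precisely this combination of the induced-subcomplex property of $\complex{B}$ with the join identity for $\complex{B}[W]$; once these are in place, Alexander duality makes (\ref{eq:assumption-2b}) essentially automatic.
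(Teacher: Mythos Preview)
Your argument is correct and follows essentially the same route as the paper: build $f$ and $g$ from PL homeomorphisms $|\complex{S}|,|\complex{S}'|\cong\s^{2d}$ so that $\complex{T},\complex{T}'$ and $M$ land on opposite sides of a separating $(2d-1)$-sphere; then verify (\ref{eq:assumption-2b}) by noting that $\complex{B}$ is induced in $\complex{S}$, that $\complex{S}[W]=\complex{B}[W]$ is a cone (since $\Delta^p-\sigma\neq\emptyset$), and invoking Alexander duality. The differences are cosmetic --- you puncture on the $\complex{B}^c$ side whereas the paper punctures inside $\complex{B}$, and you pass from $\complex{S}[W]$ to $\complex{S}-W$ via the nerve lemma on open stars where the paper uses the retraction $\|\complex{S}\|\setminus\|\complex{S}[W]\|\to\|\complex{S}-W\|$.

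One small slip worth fixing: your blanket claim ``$f(\tau)\subset\R^{2d}\setminus\topint\Sigma$'' for $\tau\in\K$ vertex-disjoint from $\sigma\in\complex{T}$ is false when $\tau$ itself contains $\Delta^p$ (e.g.\ take $\sigma\subset\partial\Delta^q$ and $\tau=\Delta^p\ast\rho$). This does no damage, since for any vertex-disjoint $\sigma,\tau\in\complex{S}_{\leq d}$ your $f$ is already an embedding and hence $f(\sigma)\cap f(\tau)=\emptyset$ automatically; the separating-ball argument is only genuinely needed for the pair $\{\sigma,M\}$, where it works exactly as you wrote.
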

\begin{proof}
Choose an arbitrary point $p\in \topint\|\complex{B}\|\setminus\|\complex{B}_d\|$ and fix a homeomorphism $\|\complex{S}\|\setminus\{p\} \cong
\R^{2d}$. Since we assume that $M$ remains a missing face after the bistellar move,
it follows that $\partial M \subseteq \K\cap \complex{L}$; in particular, $\|\partial M\| \cap \topint \|B\|=\emptyset$. By Newman's Theorem~\ref{Fact:Newman}, $\|\complex{S}\|\setminus \topint \|B\|$ is a PL $2d$-ball; in particular, it is contractible. Thus, we can extend the inclusion $\| \partial M \| \subset \| \complex{S}\|\setminus \topint \|B\| \subseteq \R^{2d}$ to an embedding of the disk $\|M\|\hookrightarrow \| \complex{S}\|\setminus \topint \|B\| \subseteq \R^{2d}$; moreover, by a suitable small perturbation, if necessary, we can choose this embedding so that it avoids $\|\complex{S}_{<d}\|$. Defining $f$ as this embedding on $M$ and as the restriction of the inclusion $\| \complex{S}_{\leq d} \| \subseteq \|\complex{S} \| \setminus \{p\} \cong \R^{2d}$ on $\|\complex{S}_{\leq d}\|$, we obtain a map $f:\|K\|\rightarrow \R^{2d}$ that satisfies (\ref{eq:assumption-1a}). Moreover, by choosing a homeomorphism $h:\|\complex{B}'\| \cong \|\complex{B}\|$ that extends the identity map on the boundary $\partial \complex{B}=\partial\complex{B}'$ we can define $g:=f \circ h |_{\|\complex{T}'\|}$ on $\complex{T}'=\complex{B}'_{\leq d}$
and $g=f$ on $\|\complex{K}\cap \complex{L}\|$. Thus, we obtain a map $g$ that satisfies (\ref{eq:assumption-1b}).

Since $\complex{B}$ is an induced subcomplex of $\complex{S}$ and is not a sphere, $\complex{B}_0$ is a \emph{proper} subset of $\complex{S}_0$, i.e.
there exists at least one vertex $v_0 \in \complex{S}_0 \setminus \complex{B}_0=\complex{S}_0' \setminus \complex{B}_0'$; hence (\ref{eq:assumption-2a}) holds.

Finally, to see that (\ref{eq:assumption-2b}) holds, let $\rho \in \complex{T}\cap \complex{T}' \subseteq (\partial \complex{B})_{\leq d}$
and $W=\complex{T}_0 \setminus \rho$. The induced subcomplex $\complex{S}[W]=\complex{B}[W]$ is a cone, and therefore contractible: we have $\rho =\sigma \ast \tau \in \partial \Delta^p \ast \partial \Delta^q$; in particular, $\Delta^p - \sigma \neq \emptyset$, hence $\complex{B}[W]=(\Delta^p- \sigma) \ast (\partial \Delta^q - \tau)$ is indeed a cone (observe that the assumption $\rho \in \partial \complex{B}$ is essential). It follows from this and Alexander duality
\cite[Chapter 8, \S 71]{Munkres:AlgebraicTopology-1984} that $\|\complex{S}\| \setminus \|\complex{S}[W]\|$ has zero homology in all dimensions. Moreover, $\|\complex{S}-W\|$ is a retract of $\|\complex{S}\|\setminus \|\complex{S}[W]\|$, so they have the same homology.
Finally, $(\K-W)_{<d} = (\complex{S}-W)_{<d}$ and $(\K-W)_d \supseteq (\complex{S}-W)_{d}$, hence $\tilde{H}_i(\K-W; \Z_2) = \tilde{H}_i(\complex{S}-W; \Z_2)=0$ for $0\leq i \leq d-1$.
\end{proof}

We will use the homological assumptions in form of the following lemma:
\begin{lemma}
\label{lem:homological-coning} Assume that
\textup{(\ref{eq:assumption-2a})} and
\textup{(\ref{eq:assumption-2b})} hold. Then there is a
``homological coning'' operation that associates with every
$i$-simplex $\rho \in \K\cap \complex{L}$, $0\leq i \leq d-1$, an
$(i+1)$-chain $v_0 \bullet \rho\in C_{i+1}(\K\cap\complex{L};\Z_2)$
with the following properties:
\begin{enumerate}
\item If $\vartheta \in C_i (\K\cap \complex{L}; \Z_2)$ is an $i$-chain then the $(i+1)$-chain
$v_0 \bullet \vartheta:=\sum_{\rho \in \vartheta}v_0\bullet \rho$ is
as disjoint from $\complex{T}$ as possible, in the sense that any
vertex $w \in \complex{T}_0$ that lies in the support of $v_0
\bullet \vartheta$ already lies in the support of $\vartheta$.
\item If $\vartheta$ is an $i$-cycle, then $\partial (v_0\bullet
\vartheta)=\vartheta$.
\end{enumerate}
\end{lemma}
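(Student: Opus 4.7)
The plan is to define $v_0\bullet\rho$ by induction on $i=\dim\rho$, arranging that the cone identity
\[
\partial(v_0\bullet\rho) \;=\; \rho + v_0\bullet\partial\rho
\]
holds for every $\rho$; extending bilinearly to arbitrary chains then forces property~(2), since for any cycle $\vartheta$ we obtain $\partial(v_0\bullet\vartheta) = \vartheta + v_0\bullet\partial\vartheta = \vartheta$.

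For the base case $i=0$ I would take, for a vertex $w\in(\K\cap\complex{L})_0$ different from $v_0$, the chain $v_0\bullet w$ to be the edge-chain of a path from $w$ to $v_0$ inside $\K-W$, where $W:=\complex{T}_0\setminus\{w\}$ if $w\in\complex{T}_0$ and $W:=\complex{T}_0$ otherwise. By \textup{(\ref{eq:assumption-2b})}, applied with $\rho=\{w\}$ or $\rho=\emptyset$ respectively, the subcomplex $\K-W$ is connected, and by \textup{(\ref{eq:assumption-2a})} it contains both $v_0$ and $w$. Before invoking connectedness I will verify the auxiliary inclusion $\K-W\subseteq\K\cap\complex{L}$: since $\complex{T}$ is an induced subcomplex of $\K$, a face of $\K$ meeting $\complex{T}_0$ only inside $\{w\}$ either lies in $\K\setminus\complex{T}\subseteq\complex{L}$ or equals the face $\{w\}\in\complex{T}\cap\complex{T}'\subseteq\complex{L}$. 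Setting $v_0\bullet v_0:=0$ completes the base case; property~(1) is immediate from the construction.

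For the inductive step, with $1\le i\le d-1$ and an $i$-simplex $\rho\in\K\cap\complex{L}$, I write $\rho^*\subseteq\rho$ for the subface spanned by the $\complex{T}$-vertices of $\rho$. Because $\complex{T}$ (resp.~$\complex{T}'$) is induced in $\K$ (resp.~$\complex{L}$) and $\rho$ lies in both, $\rho^*$ lies in $\complex{T}\cap\complex{T}'$, which makes \textup{(\ref{eq:assumption-2b})} applicable with $W:=\complex{T}_0\setminus\rho^*$. By the induction hypothesis the chain $v_0\bullet\partial\rho = \sum_{\rho_j\prec\rho}v_0\bullet\rho_j$ is already defined; its $\complex{T}$-vertices lie in $\supp(\partial\rho)\cap\complex{T}_0=\rho^*$ by property~(1), and its boundary equals $\partial\rho$ by property~(2) applied to the cycle $\partial\rho$. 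Hence $\rho+v_0\bullet\partial\rho$ is an $i$-cycle supported on $\K-W$, which by \textup{(\ref{eq:assumption-2b})} bounds some $(i+1)$-chain in $\K-W\subseteq\K\cap\complex{L}$ (the inclusion by the same induced-subcomplex argument as above); I would take any such bounding chain to be $v_0\bullet\rho$.

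The main bookkeeping obstacle is ensuring, at every step, that $\rho^*$ is genuinely a face of $\complex{T}\cap\complex{T}'$ and that the filling chain stays inside $\K\cap\complex{L}$ rather than only inside $\K$. Both issues reduce to the induced-subcomplex hypotheses on $\complex{T}$ and $\complex{T}'$; in the mildly degenerate case $p=0$, where $\complex{T}_0\neq\complex{T}'_0$, the apex of $\Delta^0$ lies in no face of $\complex{L}$, so every face of $\K\cap\complex{L}$ automatically has its $\complex{T}$-vertices inside $\complex{T}'_0$ and the argument is unaffected.
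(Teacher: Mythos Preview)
Your proof is correct and follows essentially the same route as the paper: define $v_0\bullet\rho$ inductively so that the cone identity $\partial(v_0\bullet\rho)=\rho+v_0\bullet\partial\rho$ holds, by exhibiting $\rho+v_0\bullet\partial\rho$ as a cycle in $\K-W$ (with $W=\complex{T}_0\setminus\rho$) and invoking \textup{(\ref{eq:assumption-2b})} to bound it there. The paper's argument is organized slightly differently (it singles out the shortcut cases $v_0\in\rho$ and $v_0\ast\rho\in\K\cap\complex{L}$, and phrases the filling as taking place in $\K\cap\complex{L}-W''$ with $W''=(\complex{T}_0\cap\complex{T}'_0)\setminus\rho$), but the substance---including the induced-subcomplex bookkeeping you spell out---is the same.
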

\begin{proof} If $v_0\in \rho$ define $v_0 \bullet \rho=0$. Otherwise, if $v_0
\ast \rho \in \K\cap \complex{L}$ define $v_0 \bullet
\rho=v_0 \ast \rho$. Note that in this case
\begin{equation}
\label{eq:coning}
\partial(v_0 \bullet \rho)=\rho+\sum_{u\in
\rho}v_0 \bullet (\rho\setminus \{u\}).
\end{equation}
Else, i.e. if $v_0 \ast \rho \notin \K\cap \complex{L}$, we
proceed by induction on dimension of $\rho$ (we may assume that
(\ref{eq:coning}) holds for proper subsets of $\rho$): consider the
already defined chain $z(\rho):=\rho+\sum_{u\in \rho}v_0 \bullet
(\rho\setminus \{u\})$. This is a cycle:
$\partial(z(\rho))=2\sum_{u\in \rho}\rho\setminus \{u\} + \sum_{u\in
\rho}\sum_{w: u\neq w\in \rho}v_0 \bullet (\rho\setminus
\{u,w\})=0$. By (\ref{eq:assumption-2b}), there is an $(i+1)$-chain
$b(\rho)\in C_{i+1}(\K\cap\complex{L}-W'';\Z_2)$ with
$\partial(b(\rho))=z(\rho)$ where
$W''=(\complex{T}_0\cap \complex{T}'_0) \setminus \rho$. Define $v_0 \bullet \rho=b(\rho)$;
thus equation (\ref{eq:coning}) holds for $\rho$.

By construction, the first property holds. As for the second
property, let $\vartheta$ be an $i$-cycle. Then $\partial
(v_0\bullet \vartheta)= \partial(\sum\{v_0 \bullet \rho: v_0\notin
\rho\in \vartheta\})= \sum\{\rho: v_0\notin \rho\in
\vartheta\}+\sum_{v_0\notin \rho\in \vartheta}\sum_{u\in \rho} v_0
\bullet (\rho\setminus u)= \sum_{v_0\notin \rho\in \vartheta}\rho +
\sum_{\rho': v_0\uplus \rho'\in \vartheta}v_0\bullet
\rho'=\vartheta$, where the second-to-last equality follows from
$\vartheta$ being a cycle over $\Z_2$.
\end{proof}
%
%
%
\begin{theorem}
\label{thm:bistellar-moves} Suppose that $\complex{L}$ arises from
$\complex{K}$ by a bistellar move of type $(p,q)$ as above and that
all assumptions \textup{(\ref{eq:assumption-1a})} --
\textup{(\ref{eq:assumption-2b})} are satisfied. Then
$\obstwod(\complex{K})\neq 0$ implies $\obstwod(\complex{L})\neq 0$.
\end{theorem}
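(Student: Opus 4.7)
The plan is as follows. Let $\omega\in C_{2d}(\delprod{\K}/\Z_2)$ be a cycle with $\langle\varphi_f,\omega\rangle\neq 0$, and split $\omega=\omega^{\mathrm{in}}+\omega^{\mathrm{out}}$ according to whether both factors lie in $\K\cap\complex{L}$ or not. Each cell of $\omega^{\mathrm{out}}$ has exactly one factor in $\complex{T}\setminus\complex{T}'$, since such a factor contains all of $\V(\Delta^p)$ and two such factors cannot be vertex-disjoint. By assumption~\eqref{eq:assumption-1a} the cochain $\varphi_f$ ignores $\omega^{\mathrm{out}}$, so $\langle\varphi_f,\omega^{\mathrm{in}}\rangle\neq 0$; and since $f=g$ on $\|\K\cap\complex{L}\|$, also $\langle\varphi_g,\omega^{\mathrm{in}}\rangle\neq 0$.

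I will then produce a correction $\eta\in C_{2d}(\delprod{\complex{L}}/\Z_2)$ whose cells each have a factor in $\complex{T}'$, such that $\omega^{*}:=\omega^{\mathrm{in}}+\eta$ is a cycle in $\delprod{\complex{L}}/\Z_2$. Once this is in hand, assumption~\eqref{eq:assumption-1b} forces $\langle\varphi_g,\eta\rangle=0$, yielding $\langle\varphi_g,\omega^{*}\rangle=\langle\varphi_g,\omega^{\mathrm{in}}\rangle\neq 0$, hence $\obstwod(\complex{L})\neq 0$ as required.

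The construction of $\eta$ combines the combinatorics of the bistellar move with the homological coning $v_0\bullet$ from Lemma~\ref{lem:homological-coning}. Since $\omega$ is a cycle, $\partial\omega^{\mathrm{in}}=\partial\omega^{\mathrm{out}}$, and after bad-on-bad cancellations this common boundary lies in $\delprod{(\K\cap\complex{L})}/\Z_2$. To fill it by cells touching $\complex{T}'$, I process each bad cell $\{\sigma,\omega_\sigma\}$ with $\sigma=\Delta^p\ast\tau_\sigma$ using the face identity, obtained from any vertex $v\in\V(\Delta^q)\setminus\V(\tau_\sigma)$:
\[
\partial(\sigma\ast v)\;=\;\sigma\;+\;(\partial\Delta^p\ast\tau_\sigma\ast v)\;+\;(\Delta^p\ast\partial\tau_\sigma\ast v),
\]
which rewrites $\sigma$ modulo a chain in $\complex{T}'$ (the middle summand), bad chains of ``shorter'' type, and a formal $(d{+}1)$-boundary. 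Transferring this identity to the deleted product and iterating over bad cells converts $\omega^{\mathrm{out}}$ into a sum of $\complex{T}'$-cells (which will form $\eta$) plus a $(2d{-}1)$-cycle inside $\delprod{(\K\cap\complex{L})}/\Z_2$, which the homological coning realizes as $\partial$ of a $2d$-chain there.

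The principal obstacle is the disjoint-support requirement of the deleted product: the vertex $v$ chosen in each step must avoid $\V(\omega_\sigma)$, and the coning corrections must not introduce $\complex{T}$-vertices that would collide with the partner factor. Assumption~\eqref{eq:assumption-2a} supplies a cone point $v_0\notin\complex{T}_0$, and the ``as disjoint from $\complex{T}$ as possible'' property of Lemma~\ref{lem:homological-coning}(1) keeps the cone's reach in check. The final identity $\partial(\omega^{\mathrm{in}}+\eta)=0$ then follows by combining the face identity above with Lemma~\ref{lem:homological-coning}(2) and carefully tracking cancellations.
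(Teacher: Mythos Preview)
Your high-level strategy---split $\omega=\omega^{\mathrm{in}}+\omega^{\mathrm{out}}$, build a correction $\eta$ supported on $\complex{T}'$, and use (\ref{eq:assumption-1a}), (\ref{eq:assumption-1b}) to transfer nonvanishing from $\varphi_f$ to $\varphi_g$---is exactly the paper's framework. The gap is in the construction of $\eta$.

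The face-identity iteration as you describe it, processing each bad cell $\{\sigma,\mu\}$ with $\sigma=\Delta^p\ast\tau_\sigma$ individually, does not terminate: the ``shorter'' bad part $\Delta^p\ast\partial\tau_\sigma\ast v$ again consists of simplices $\Delta^p\ast\tau'$ with $\dim\tau'=\dim\tau_\sigma$, and repeating with new vertices produces loops rather than a descent. (The ``formal $(d{+}1)$-boundary'' $\partial(\sigma\ast v)$ also does not live in $\delprod{\complex{L}}/\Z_2$, so it cannot be discarded there.) What makes the argument work is grouping by the \emph{partner} $\mu$: since $\omega$ is a cycle, the bad simplices paired with a fixed $\mu$ form $\Delta^p\ast\gamma_\mu$ for a $(d{-}p{-}1)$-\emph{cycle} $\gamma_\mu\subset\partial\Delta^q$; then a single filling $\beta_\mu$ in $\Delta^q$ with $\partial\beta_\mu=\gamma_\mu$ (chosen disjoint from $\mu$) replaces $\Delta^p\ast\gamma_\mu$ by $\partial\Delta^p\ast\beta_\mu\subset\complex{T}'$ in one step. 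This is the paper's Step~2.

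This first patch, however, creates a secondary defect: the ``problematic'' simplices $\sigma=\sigma^{p-1}\ast\sigma^{d-p}\in\partial\Delta^p\ast\Delta^q$ now appear in the patching chains, and the $d$-chains paired with them are no longer cycles. The paper's Step~3 repairs this by adding $\{\sigma,v_0\bullet\partial\alpha''_\sigma\}$ for each problematic $\sigma$. The delicate point your sketch omits is why this second repair does not re-break the unproblematic side: one needs that $\zeta(\rho)=\sum_{\tau\succ\rho}\beta_\tau$ is a cycle for every $(d{-}1)$-simplex $\rho$ (the paper's Claim~1), so that the new contribution to each unproblematic partner is $\partial\Delta^p\ast\zeta(\rho)$, itself a cycle. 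Your sentence ``a $(2d{-}1)$-cycle\ldots which the homological coning realizes as $\partial$ of a $2d$-chain'' elides this: Lemma~\ref{lem:homological-coning} cones chains in $\K\cap\complex{L}$, not in the deleted product, so it must be applied to one factor at a time, and Claim~1 is exactly what guarantees the other factor remains a cycle afterward.
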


\begin{proof} Let $\omega\in H_{2d}(\delprod{\complex{K}}/\Z_2; \Z_2)$ be a homology class witnessing the nonvanishing of
$\obstwod(\complex{K})$. Choose a $2d$-cycle representing this
homology class; by abuse of notation, we denote this cycle also by
$\omega$; thus, $\varphi_f(\omega)=\sum_{\{\sigma,\tau\}\in \omega}
f(\sigma)\cdot f(\tau)\neq 0$. If $\omega$ does not contain any
simplices containing $\Delta^p$ (for instance, this is automatically
the case if $p>d$), it is also a cycle in $C_{2d}(\delprod{\complex{L}}/\Z_2; \Z_2)$,
and by (\ref{eq:assumption-1b}), also $\varphi_g(\omega)\neq 0$, and
we are done.
Otherwise, we obtain a chain $\widetilde{\omega}$ from $\omega$ by removing all the pairs $\{\sigma,\tau\}$
involving simplices containing $\Delta^p$, and replace them by pairs
of simplices in $\complex{L}$ in such a way as to ensure three
properties:
\begin{enumerate}
\item[(i)] for every new pair $\{\sigma,\tau\}$ of simplices that we add, $\sigma$ and $\tau$ are disjoint, i.e. $\widetilde{\omega}\in C_{2d}(\delprod{\complex{L}}/\Z_2; \Z_2)$;
\item[(ii)] the resulting chain $\widetilde{\omega}$ is a cycle, i.e., for each $d$-simplex $\sigma$, the $d$-chain $\{\tau: \{\sigma, \tau \}\in \widetilde{\omega}\}$ is a cycle;
\item[(iii)] all pairs $\{\sigma,\tau\}$ that we delete have the property that at least one of the two simplices belongs to the subcomplex $\complex{T}$, and all pairs that we add contain at least one simplex in $\complex{T}'$; consequently, by \textup{(\ref{eq:assumption-1a})} and \textup{(\ref{eq:assumption-1b})},
$\varphi_f(\omega)=\varphi_g (\widetilde{\omega})$.
\end{enumerate}

\noindent We now describe in detail how this is done. As remarked above, we may assume that $p \leq d$.

Note that if $p<d$, then $q=2d-p>d$, and we do not add any new $d$-faces. If $p=d$, then $q=d$, and $\complex{L}$ contains exactly one new $d$-face, namely $\Delta^q$ itself; we remark that this $d$-face will not be used in the construction of $\widetilde{\omega}$.

\paragraph{Step 1: Removing the Simplices Containing $\Delta^p$.} For each $d$-simplex $\tau \in \complex{K}$, the set
$\alpha_\tau:=\{\sigma \in \complex{K}:  \{\sigma,\tau\}\in
\omega\}$  of $d$-simplices paired up with $\tau$ in $\omega$ is a
$d$-cycle. In particular, the subset of $\sigma$'s containing
$\Delta^p$ is of the form $\{\sigma \in \complex{L}:
\{\sigma,\tau\}\in \omega,\Delta^p\subseteq \sigma\}=\Delta^p \ast
\gamma_\tau$ for a uniquely determined $(d-p-1)$-cycle $\gamma_\tau
\in Z_{d-p-1}(\partial \Delta^q)$ whose support is disjoint from
$\tau$
(namely, $\gamma_\tau$ is the link of $\Delta^p$ in $\alpha_\tau$; here
we use the assumption that $\st(\Delta^p,\K)\subseteq \complex{T}$).
To see that $\gamma_\tau$ is a cycle, note that $\partial
\gamma_\tau \neq 0$ implies $0\neq\Delta^p \ast \partial \gamma_\tau
\subseteq \partial \alpha_\tau$, a contradiction. Observe that in
the case $p=d$, the cycle $\gamma_\tau$ is either zero or consists
of the single $(-1)$-dimensional face $\emptyset$. Moreover, for any
$p$, $\gamma_\tau=0$ for all $\tau \in \complex{T}$. This is clear
if $\tau \cap \Delta^p \neq \emptyset$; otherwise, $\tau \in
\partial \Delta^q$, and the support of any nontrivial
$(d-p-1)$-cycle consists of at least
$d-p+1$
vertices
if $p<d$
, so there is not enough room in $\partial \Delta^q$ for both $\tau$
and a nontrivial $\gamma_\tau$ disjoint from it; if $p=d$, then
$q=d$, and $\partial\Delta^q$ does not contain any $d$-simplex
$\tau$. Let $\omega'$ be the chain obtained from $\omega$ by
deleting all pairs of simplices one of which contains $\Delta^p$,
$$\omega':=\omega - \sum\big\{ \{\sigma, \tau\} \in \omega: \Delta^p \subseteq \sigma\big\}.$$
Clearly, (iii) is satisfied. Observe, however, that $\omega'$
generally fails to be a cycle, since for each $\tau$, the chain
$\alpha'_\tau:=\{\sigma: \{\sigma,\tau\}\in \omega'\}$ has boundary
$\partial \Delta^p\ast \gamma_\tau$, where $\partial \Delta^p$
denotes the $(p-1)$-chain consisting of the facets of $\Delta^p$,
which is nonzero iff $\gamma_\tau\neq 0$.

\paragraph{Step 2: Patching holes  of the first kind.}
The idea is to repair this as follows: The complex $\Delta^q$
and all of its induced subcomplexes have vanishing homology. Recall
that $q=2d-p$.
 In particular, for each nontrivial $(d-p-1)$-cycle
$\gamma$ in $\partial\Delta^q$, we can choose a ``patching''
$(d-p)$-chain $\beta =\beta(\gamma)\in C_{d-p}(\Delta^q)$ with
$\partial \beta= \gamma$. If $p<d$, i.e., $d-p-1\geq 0$, then we can
choose $\beta(\gamma)$ to have the same support as $\gamma$, so that
$\beta_\tau:=\beta(\gamma_\tau)$ and $\tau$ are disjoint for every
$d$-simplex $\tau$. If $p=d=q$, we cannot choose one patching
$0$-chain $\beta$ that works for all $\tau$: any vertex $w\in
\Delta^q_0$ satisfies $\partial w=\emptyset$, but there may be
$d$-simplices $\tau$ with $\gamma_\tau=\emptyset$ yet $w\in \tau$.
However, since $\Delta^q\not \in \K$, this cannot be the case for
all $w$ for a given $\tau$.
Thus, if we choose an arbitrary ordering $w_0,w_1,\ldots,w_d$ of the
vertices of $\Delta^q$, we can define $\beta_\tau$ to be the first
$w_i \not \in \tau$, and again $\beta_\tau$ and $\tau$ are disjoint
for all $\tau$. We set
\begin{equation*}
\omega'':=\omega' + \big\{ \{\partial \Delta^p \ast
\beta_\tau,\tau\} :  \gamma_\tau\neq 0\big\}. 
\end{equation*}
Note that $\partial(\partial \Delta^p \ast
\beta(\gamma_\tau))=\partial \Delta^p \ast \gamma_\tau=\partial
\alpha'_\tau$. Consider a $d$-simplex $\sigma$ that is not of the
form $\sigma=\sigma^{p-1}\ast \sigma^{d-p}$ with $\sigma^{p-1}\in
\partial \Delta^p$ and $\sigma^{d-p} \subseteq \Delta^q$; we call
these simplices \emph{unproblematic}. Such an unproblematic simplex
does not appear in any of the patching chains $\partial \Delta^p
\ast \beta_\tau$, hence it is paired up in $\omega''$ with
$\alpha''_\sigma=\alpha'_\sigma + \partial\Delta^p\ast
\beta_\sigma$, which is a $d$-cycle by construction. On the other
hand, a \emph{problematic} simplex $\sigma =\sigma^{p-1}\ast
\sigma^{d-p}$ with $\sigma^{p-1}\in \partial \Delta^p$ and
$\sigma^{d-p}\subseteq \Delta^q$ is paired up in $\omega''$ with the
$d$-chain
$$\alpha''_\sigma =\alpha_\sigma + \{\tau : \sigma^{d-p} \in \beta_\tau\},$$
which need not be a cycle. We will take care of these ``holes of the second kind'' in Step~3 below. Observe, that the other two requirements (i) and (iii) are still met: By the way we choose the $\beta_\tau$, all added pairs have the property that the simplices are disjoint, and each patching chain $\partial\Delta^p\ast \beta$ is contained in $\complex{T}'$.

\paragraph{Step 3: Patching holes of the second kind.}
As remarked above, if $\sigma=\sigma^{p-1} \ast \sigma^{d-p}$ is a
problematic $d$-simplex in $\complex{L}$ (i.e., $\sigma^{p-1} \in
\partial \Delta^p$ and $\sigma^{d-p}$ a face of $\Delta^q$) then
$\sigma$ is paired up in $\omega''$ with the chain
$\alpha''_\sigma$, which need not be a cycle. More precisely, a
$(d-1)$-simplex $\rho \in \complex{L}$ appears on the boundary
$\partial \alpha''_\sigma = \sum\{ \partial \tau : \sigma^{d-p} \in
\beta_{\tau}\}$ with coefficient
$$|\{\tau: \  \tau \succ \rho,\  \sigma^{d-p} \in \beta_\tau\}| \pmod{2},$$
i.e. $\rho \in \partial\alpha''_\sigma$ iff there are an odd number of $d$-simplices  $\tau \succ \rho$ with $\sigma^{d-p} \in \beta_\tau$.
In other words, if we consider the $(d-p)$-chain in $\Delta^q$ given
by
$$\zeta(\rho):=\sum_{\tau \succ \rho} \beta_\tau,$$
then $\rho \in \partial \alpha''_\sigma$ iff $\sigma^{d-p} \in
\zeta(\rho)$; in particular, $\partial \alpha''_\sigma$ depends only
on $\sigma^{d-p}$,
and not on  $\sigma^{p-1}$.
%

\begin{claim} For each $(d-1)$-simplex $\rho$, the $(d-p)$-chain $\zeta(\rho)$ is a cycle.

\end{claim}
\begin{proof}[Proof of Claim 1]
For each $\rho$, consider the $(d-p-1)$-chain $\sum_{\tau\succ \rho} \gamma_\tau$ in $\Delta^q$. By definition of $\gamma_\tau$, a $(d-p-1)$-simplex $\pi$ belongs to this chain iff there are an odd number of $\tau \succ \rho$ with $\pi \in \gamma_\tau$, i.e., with $\tau \in \alpha_{\Delta^p \ast \pi}$, which is the case iff $\rho \in \partial \alpha_{\Delta^p \ast \pi}$. However, $\alpha_\sigma$ is a $d$-cycle for all $d$-simplices $\sigma \in \complex{K}$. Therefore, the $(d-p-1)$-chain $\sum_{\tau\succ \rho} \gamma_\tau$ is zero, and consequently the $d$-chain $\zeta(\rho)=\sum_{\tau\succ \rho} \beta_\tau$ is a cycle.
\end{proof}

Define
$$\widetilde{\omega}:=\omega'' + \big\{ \{\sigma, v_0 \bullet \partial \alpha''_\sigma\}: \sigma \textrm{ problematic}\big\}.$$
Every newly added pair $\{\sigma,\tau\}$ of simplices contains one
problematic simplex $\sigma \in \complex{T}'$, and one
nonproblematic simplex $\tau$. This immediately shows (iii).
Moreover, every nonproblematic simplex $\tau$ is paired up with
%
%
$$\alpha''_\tau + \underbrace{\{\sigma: \sigma \textrm{ problematic}, \exists\textrm{ odd number of } \rho \in \partial \alpha''_\sigma \textrm{ with } \tau \in v_0\bullet \rho\}}_{\textstyle \sum\big\{ \partial \Delta^p \ast \zeta(\rho): \tau \in v_0\bullet \rho \big\}},$$
which is a sum of cycles by construction and by Claim~1.

On the other hand, each problematic simplex $\sigma$ is paired up with
$$\alpha''_\sigma + v_0\bullet \partial\alpha''_\sigma,$$
which is a cycle and disjoint from $\sigma$, by
Lemma~\ref{lem:homological-coning}. This shows both (i) and (ii) and
completes the proof of the theorem.
\end{proof}

\begin{proof}[Proof of Theorem~\ref{thmMissingFacePL}]
Let $\complex{S}$ be a PL $2d$-sphere. We want to show the following
\begin{itemize}
\item[$(\ast)$] For every missing $d$-face $M$ of $\complex{S}$, the complex $\K:=\complex{S}_{\leq d}\cup\{M\}$ satisfies $\obstwod(\K)\neq 0$.
\end{itemize}

By Pachner's result (Theorem~\ref{fact:PL-bistellar}), any PL $2d$-sphere can be obtained from the boundary of the $(2d+1)$-simplex by a finite sequence of bistellar moves. We prove $(\ast)$ by induction on the number of bistellar moves. The base case $\complex{S}=\partial \Delta^{2d+1}$ is trivial, since there are no missing faces at all.

For the induction step, let $\complex{S}$ be a PL $2d$-sphere that satisfies $(\ast)$, and let $\complex{S}'$ be obtained  from $\complex{S}$ by the bistellar move that replaces $\Delta^p\ast \partial \Delta^q$ by $\partial\Delta^p \ast \Delta^q$, $p+q=2d$. We have to show that $\complex{S}'$ satisfies $(\ast)$ as well. Let $M$ be a missing $d$-face of $\complex{S}'$ and let $\complex{L}:=\complex{S}'_{\leq d}\cup\{M\}$.

If $M$ is also a missing face of $\complex{S}$ then, by Observation~\ref{obs:PLspheres-are-good}, the complexes $\K$ and $\complex{L}$ satisfy all assumptions of  Theorem~\ref{thm:bistellar-moves}. By the induction hypothesis, $\obstwod(\K)\neq 0$, and thus $\obstwod(\complex{L})\neq 0$ as well. Thus, we may assume that $M$ is not a missing face of $\complex{S}$. This means that either $M \in \complex{S}_d$, or there exists a facet $F$ of $M$ with $F\not \in \complex{S}_{d-1}$.
\begin{claim}
Either $p=q=d$ and $M=\Delta^p$, or $q=d-1$ and $M=v_0\ast \Delta^q$ for some
$v_0\notin \Delta^p_0 \cup \Delta^q_0$.
\end{claim}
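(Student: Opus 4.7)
I would carry out a case analysis on how $M$ fails to be a missing face of $\complex{S}$: either (1) $M\in\complex{S}_d$, or (2) $M\notin\complex{S}_d$ and some facet $F\prec M$ fails to lie in $\complex{S}_{d-1}$. Throughout I rely on two structural remarks about the bistellar move. First, the $d$-faces it removes are exactly the $d$-dimensional faces of $\Delta^p\ast\partial\Delta^q$ containing $\Delta^p$ fully, i.e.\ the $\Delta^p\ast\rho$ with $\rho\in\partial\Delta^q$; moreover, \emph{no} face of $\complex{S}'$ contains $\Delta^p$ as a subface, since $\st(\Delta^p,\complex{S})=\Delta^p\ast\partial\Delta^q$ is deleted entirely and no newly added face contains $\Delta^p$. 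Dually, the $(d-1)$-faces added by the move are exactly those of the form $\sigma\ast\Delta^q$ with $\sigma\in\partial\Delta^p$, and every face of $\complex{S}'$ containing $\Delta^q$ as a subface has this form, because $\Delta^q\notin\complex{S}$.

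\textbf{Case (1).} Since $M$ is missing in $\complex{S}'$ we have $M\notin\complex{S}'_d$, so $M$ must be one of the deleted $d$-faces; write $M=\Delta^p\ast\rho$ with $\rho\in\partial\Delta^q$ and $\dim\rho=d-p-1$. If $\rho$ contained some vertex $v$, the facet $M\setminus\{v\}=\Delta^p\ast(\rho\setminus\{v\})$ would still contain $\Delta^p$ in full, so by the first remark it could not lie in $\complex{S}'$, contradicting $\partial M\subseteq\complex{S}'$. Hence $\rho=\emptyset$, forcing $d-p-1=-1$, so $p=d$, $q=2d-p=d$, and $M=\Delta^p$.

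\textbf{Case (2).} Then $F\in\complex{S}'_{d-1}\setminus\complex{S}_{d-1}$, so by the second remark $F=\sigma\ast\Delta^q$ with $\sigma\in\partial\Delta^p$ (possibly $\sigma=\emptyset$) and $\dim\sigma=d-q-2$; in particular $q\leq d-1$. Write $M=F\cup\{v\}$ with $v\notin F$. First I rule out $v\in\Delta^p_0$: otherwise $\sigma\cup\{v\}\subseteq\Delta^p_0$ would have cardinality $|\sigma|+1=d-q=p-d<p+1$, so $\sigma\cup\{v\}\in\partial\Delta^p$, which would make $M=(\sigma\cup\{v\})\ast\Delta^q$ an added $d$-face of $\complex{S}'$, contradicting missingness. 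Thus $v\notin\Delta^p_0\cup\Delta^q_0$. Next I show $\sigma=\emptyset$: otherwise, for any $u\in\sigma_0$ the facet $M\setminus\{u\}=(\sigma\setminus\{u\})\ast\Delta^q\cup\{v\}$ would contain $\Delta^q$ as a subface together with $v\notin\Delta^p_0$, which by the second remark prevents it from lying in $\complex{S}'$, again contradicting $\partial M\subseteq\complex{S}'$. Therefore $\sigma=\emptyset$, giving $d-q-2=-1$, i.e.\ $q=d-1$, and $M=\emptyset\ast\Delta^q\cup\{v\}=v_0\ast\Delta^q$ with $v_0:=v\notin\Delta^p_0\cup\Delta^q_0$.

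\textbf{Main obstacle.} The difficulty is bookkeeping rather than depth: one must track precisely which faces of $\complex{S}'$ can contain $\Delta^p$ or $\Delta^q$ as a subface, and then combine this with the small dimension/cardinality counts forced by $p+q=2d$ and $\dim M=d$. The two structural remarks on which everything hinges are themselves immediate consequences of the bistellar-move hypotheses $\st(\Delta^p,\complex{S})=\Delta^p\ast\partial\Delta^q$ and $\Delta^q\notin\complex{S}$.
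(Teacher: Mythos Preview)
Your proof is correct and follows essentially the same approach as the paper's: the same two-case split, the same reason why $M=\Delta^p$ in Case~(1), and in Case~(2) the same two sub-steps (first $v_0\notin\Delta^p_0$, then the auxiliary factor in $\partial\Delta^p$ must be empty). Your version is slightly more explicit in isolating the two structural remarks about which faces of $\complex{S}'$ can contain $\Delta^p$ or $\Delta^q$, but the content is identical.
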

\begin{proof}
If $M\in \complex{S}$, then $M$ has to be deleted during the bistellar move, i.e., $M\supseteq \Delta^p$. On the other hand, all facets of $M$ have to be preserved, hence $M=\Delta^p$. Otherwise, if $M\not \in \complex{S}$, then also $F \not \in \complex{S}$ for some facet $F \prec M$. On the other hand, $F \in \complex{S}'$, hence $F =G \ast \Delta^q$ for some $G \in (\partial \Delta^p)_{d-q-2}$, since these are the only faces that are newly created. In particular, $q\leq d-1=\dim F$ and $p=2d-q\geq d+1$. Consequently, $M=v_0\ast G \ast \Delta^q$ for some vertex $v_0 \not \in \Delta^q_0$.

We have $\dim (v_0 \ast G)=d-q-1=p-d-1<p$. Thus, $v_0\notin \Delta^p_0$, else $v_0 \ast G \in \partial \Delta^p$ (and hence $M\in \complex{S}'$, a contradiction). It follows that if $G\neq \emptyset$, then for any facet $H$ of $G$, $v_0\ast H \ast \Delta^q$ is a facet of $M$ that is not a face of $\complex{S}'$, contradicting the assumption that $M$ is a missing face. Thus, $M=v_0\ast \Delta^q$.
\end{proof}

In the first case, if $p=q=d$ and $M=\Delta^p$, then $\Delta^q$ is a missing face of $\complex{S}$ and $\complex{L}=\complex{K}$, so we are again done by the induction hypothesis.

Otherwise, $M=\Delta^q\cup \{v_0\}$ for $v_0\notin \Delta^p_0 \cup \Delta^q_0$. We define a cycle $\omega\in C_{2d}(\delprod{\complex{L}}/\Z_2; \Z_2)$ that will be a witness cycle for the nonvanishing of the van Kampen obstruction.
Let $V:=\{v_0\}\cup \Delta^p_0 \cup \Delta^q_0$. Observe that $\Delta^p_0 =(\partial \Delta^p)_0$, since $p\geq 1$, hence $V\subseteq \complex{L}_0$. Moreover, $|V|=2d+3$; if we knew that all $d$-simplices $\sigma \subseteq V$ are faces of $\complex{L}$, they would form a copy of the van Kampen complex
$\Delta^{2d+2}_{\leq d}$, and we could take $\omega$ to consist of all vertex-disjoint pairs $\{\sigma, \tau\}$ of such simplices. Instead, we will work with a ``homological version'', as follows: For $\sigma \subseteq V$, $\dim \sigma =d$, define a $d$-chain $B(\sigma)$ in $\complex{L}$ as follows: $B(M)=M$, $B(\sigma)=\sigma$ for $\sigma \subseteq \Delta^p_0 \cup \Delta^q_0$ (these simplices are certainly in $\complex{L}$), and $B(\sigma)=v_0\bullet \rho$ for $\sigma=v_0 \ast \rho$, where $\bullet$ is the homological coning operation defined in Lemma~\ref{lem:homological-coning}; recall that if $\sigma \in \complex{L}$ then $v_0 \bullet \rho= \sigma$. In any case, $\rho \in \K\cap \complex{L}$, thus $v_0 \bullet \rho$ is well defined. Let
$$\omega=\sum\{\{B(\sigma),B(\tau)\}: \sigma,\tau \subseteq V ,\ \dim \sigma=\dim \tau=d,\ \sigma\cap \tau=\emptyset \}.
$$
By Lemma~\ref{lem:homological-coning} all pairs in the above
sum are indeed disjoint, and clearly all simplices occuring in these pairs are in $\complex{L}$,
hence $\omega\in C_{2d}(\delprod{\complex{L}}/\Z_2; \Z_2)$. We will
show that $\omega $ is a witness cycle for the obstruction cocycle;
first let us verify that it is a cycle:

$M$ is paired up with $\alpha_M=\{\tau\in \complex{L}:
\{M,\tau\}\in \omega\}$ in $\omega$. As $|V\setminus M|=d+2$,
every $d$-element subset of $V \setminus M$ is contained in exactly two
$d$-simplices $\tau \in \alpha_M$, hence $\partial \alpha_M = 0$. Similarly, for all simplices $\sigma=v_0\ast \rho$ and all simplices $\sigma'\in v_0\bullet \rho$, we have $\partial \alpha_\sigma=0$ and $\partial \alpha_{\sigma'} = 0$.

For $\sigma \subseteq U:=V\setminus\{v_0\}$, $\partial \alpha_\sigma =\partial(U\setminus \sigma +
\sum_{u\in U\setminus \sigma }v_0\bullet((U\setminus \sigma)\setminus \{u\}))=
\sum_{u\in U\setminus \sigma}(U\setminus \sigma)\setminus \{u\} + \sum_{u\in
U\setminus \sigma}[(U\setminus \sigma)\setminus \{u\}+\sum_{w\in (U\setminus
\sigma)\setminus \{u\}}v_0\bullet((U\setminus \sigma)\setminus \{u,w\})]=0$,
where the middle equation follows from the property (\ref{eq:coning}) established in the proof of Lemma~\ref{lem:homological-coning}.

To see that $\obstwod(\complex{L})(\omega)\neq 0$, consider a point $o\in \topint \|\complex{S}'\| \setminus \|\complex{B}'\cup \complex{S}'_{\leq d}\|$, where $\complex{B}'= \partial \Delta^p \ast \Delta^q$, and a homeomorphism $h: \|\complex{S}'\|\setminus \{o\}\rightarrow \R^{2d}$.
Notice that in the restriction of $h$ to $\complex{L}\setminus \{M\} =\complex{S}'_{\leq d} \hookrightarrow \R^{2d}$, the cycles $\|\partial M\|$ and $\|\partial(V\setminus M)\|=\|\partial \Delta^p\|$ are linked (over $\Z_2$).
To see this, consider the convex geometric embedding of $\complex{B}' = \partial \Delta^p \ast \Delta^q$ in $\R^{2d}$ obtained by placing the vertices of $\Delta^p$ and of $\Delta^q$ in complementary subspaces of $\R^{2d}$ (recall that $p+q=2d$) such that the interiors of the two simplices intersect in a single point and extending linearly on simplices. On the other hand, we have the embedding of $\complex{B}'$ given by $h$. Let $g$ be the homeomorphism from $h(\|\complex{B}'\|)$ to the geometric realization of $\complex{B}'$. If we extend the geometric realization to $\Delta^p$, then the image of $\Delta^p$ will be contained in the convex embedding of $\complex{B}'$ and there will be exactly one
transversal point of intersection between the interiors of $\Delta^p$ and $\Delta^q$. Thus, by pulling back this convex geometric realization of $\Delta^p$ using $g$, we can extend $h$ to $\Delta^p$ in such a way that $h(\Delta^p)$ is contained in $h(\complex{B}')$ and such that $h(\Delta^p)\cdot h(\Delta^q)=1$. Since $h(\partial M)\setminus h(\Delta^q)$ is disjoint from $h(\complex{B}')$, it follows that also $h(\Delta^p)\cdot h(\partial M)=1$.

By the relation between linking and intersection numbers mentioned in Section~\ref{sec:Background}, it follows that if we take a general position extension of $h$ to $M$ instead of to $\Delta^p$, the intersection number $h(M)\cdot h(\partial \Delta^p)\neq 0 \imod{2}$. Moreover, in the cycle $\omega$, the simplex $M$ is paired exactly with $\partial \Delta^p$, and $h$ is injective on $\complex{L}\setminus \{M\}$.
It follows that $\obstwod(\complex{L})(\omega)=\varphi_h(\omega)=h(M)\cdot h(\partial \Delta^p)\neq 0 \imod{2}$.
This completes the proof of the theorem.\end{proof}

\section{Relation to the $g$-conjecture}\label{sec:g-conj}
McMullen~\cite{McMullen:NumberFaces-71} conjectured a full
characterization of the possible $f$-vectors of simplicial
polytopes, by means of numerical conditions on the face numbers.
He also asked whether this characterization holds for all
simplicial spheres, now known as the $g$-conjecture. The
sufficiency of these conditions was proved by Billera and Lee
\cite{BilleraLee:SufficiencyMcMullensConditions-1981} who
constructed boundary complexes of simplicial polytopes with the
prescribed $f$-vectors, and necessity for (the smaller family of)
boundary complexes of simplicial polytopes was proved by Stanley
\cite{Stanley:NumberFacesSimplicialPolytope-80}, using algebraic
machinery. We review now an algebraic $g$-conjecture, from which
the $g$-conjecture follows; this is what Stanley proved in the
case of simplicial polytopes by using the hard Lefschetz theorem
for toric varieties of rational polytopes. For further background,
we refer the reader to \cite{Kalai:AlgebraicShifting-02} and the
references therein.

Let $\K$ be a simplicial complex on $\K_0=[n]$. Let
$A=\R[x_1,..,x_n]$ be the polynomial ring, with the usual grading by degree.
The \emph{face ring} of $\K$ is
$\R[\K]=A/I_{\K}$ where $I_\K$ is the ideal in $A$ generated by
the monomials whose support is \emph{not} an element of $\K$. Let
$\Theta=(\theta_1,..,\theta_d)$ be generic
1-forms, i.e. their coefficients (in the basis of monomials in the $x_i$'s) are algebraically independent over $\Q$. Consider the quotient ring $\R[\K]/(\Theta)=H_0\oplus H_1\oplus...$
where the grading is induced by the degree grading in $A$, and
$(\Theta)$ is the ideal in $\R[\K]$ generated by the images
of the elements of $\Theta$ under the projection $A\rightarrow
\R[\K]$.

If $\K$ is a simplicial $(d-1)$-sphere then for every $0\leq i\leq d$, $\dim_{\R}H_i=h_i(\K):=\sum_{0\leq j\leq i}(-1)^{i-j}\binom{d-j}{i-j}f_{j-1}(\K)$ where $f_{j}(\K)=|\K_{j}|$ (then $f_{-1}(\K)=1$). Moreover, these numbers satisfy the \emph{Dehn-Sommerville relations} $h_i(\K)=h_{d-i}(\K)$.

The algebraic $g$-conjecture states that for an element $\omega\in A_1$ which is generic w.r.t. $\Theta$ (i.e., whose coefficients are algebraically independent from those of the $\theta_i$'s), the multiplication maps $\omega^{d-2i}: H_i\longrightarrow
H_{d-i}$, $m\mapsto \omega^{d-2i}m$, are isomorphisms for
every $0\leq i\leq \lfloor d/2\rfloor$.

Recently Swartz
\cite[Theorem~4.26]{Swartz:FromSpheresToManifolds} showed that in
order to prove the $g$-conjecture for a family of spheres that is
closed under links (e.g. PL spheres or homology spheres, but not
simplicial spheres), it is enough to show that for every $i\geq 1$
and every $2i$-sphere in that family, the map $\omega:
H_i\longrightarrow H_{i+1}$ is an isomorphism.

The algebraic $g$-conjecture has an interpretation in terms of
\emph{symmetric algebraic shifting}. Algebraic shifting, introduced by Kalai,
is an operator that assigns to each simplicial complex another,
canonically defined simplicial complex that is combinatorially simpler
(shifted), while a lot of important information about the original complex
(e.g., its face numbers as well as the Betti numbers) is preserved.
There are two variants of algebraic shifting, a \emph{symmetric} one
based on polynomial rings and another one based on \emph{exterior}
algebra; we refer to the survey \cite{Kalai:AlgebraicShifting-02} for
further background concerning algebraic shifting.

The symmetric case of the following
strong conjecture of Kalai and Sarkaria implies the algebraic
$g$-conjecture:
\begin{conjecture}[\protect{\cite[Conjecture 27]{Kalai:AlgebraicShifting-02}}]\label{conjKalaiSarkaria}
Let $\K$ be a simplicial complex on $n$ vertices which is embeddable
in the $(d-1)$-sphere. Then its algebraic shifting \textup{(}symmetric or exterior\textup{)}
satisfies $\Delta(\K)\subseteq \Delta(d,n)$, where $\Delta(d,n)$ is the symmetric shifting of the cyclic $d$-polytope on $n$ vertices \textup{(}its combinatorics is known\textup{)}.
\end{conjecture}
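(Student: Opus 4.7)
The plan is to bridge the embedding hypothesis and the shifting conclusion using the topological machinery developed in this paper, and in particular Theorem~\ref{thmMissingFacePL}, as the main engine. Working by contrapositive, I would assume $\Delta(\K)\not\subseteq \Delta(d,n)$ and pick a face $\sigma\in \Delta(\K)\setminus \Delta(d,n)$ of minimal size. Since the facets of $\Delta(d,n)$ are precisely the Gale-admissible $d$-subsets of $[n]$, the existence of $\sigma$ encodes a purely combinatorial obstruction -- an explicit ``evenness'' pattern is violated -- and minimality of $|\sigma|$ forces every proper subset of $\sigma$ to lie in $\Delta(d,n)$. The central step would be to translate this algebraic violation into a \emph{topological} witness $\K_0\subseteq \K$ fitting the hypothesis of Theorem~\ref{thmMissingFacePL}: a subcomplex that contains (or is homotopy equivalent to) the $k$-skeleton of a PL $2k$-sphere together with a missing $k$-face, for some $k$ with $2k\geq d-1$.

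Once such a $\K_0$ is available, Theorem~\ref{thmMissingFacePL} finishes the argument: one obtains $o_{\Z_2}^{2k}(\K_0)\neq 0$, so $\K_0$ does not embed into $\s^{2k}$, and a fortiori $\K$ does not embed into $\s^{d-1}\subseteq \s^{2k}$, contradicting the hypothesis. A parallel program in the exterior shifting case could exploit the tighter relation between the exterior face ring and simplicial (co)homology, which might allow one to bypass the PL restriction altogether; conversely, settling Conjecture~\ref{conjMissingFace} in the non-PL case would close the one remaining gap in the PL-based version of this scheme.

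The main obstacle, and the reason the Kalai--Sarkaria conjecture has remained open for so long, is precisely the extraction of the topological witness $\K_0$ from the algebraic datum $\sigma$. Symmetric algebraic shifting is defined through generic linear parameters acting on the face ring $\R[\K]/(\Theta)$, and the only known general bridge to topology runs through Betti numbers, which capture only the ``initial'' portion of $\Delta(\K)$. Non-initial faces such as $\sigma$ have no currently known topological interpretation, so any realistic attempt would need either a new obstruction theory that commutes with symmetric shifting, or a direct combinatorial reading of the face ring quotient that exhibits the missing-face patterns exploited in Theorem~\ref{thmMissingFacePL}. A natural intermediate test case is the threshold situation $n=2d+3$ with $|\sigma|=d+1$, where the van Kampen complex $\Delta^{2d+2}_{\leq d}$ of Theorem~\ref{thmVK-F} is the expected witness and the combinatorial rigidity may permit the bistellar-move analysis of Section~\ref{sec:bistellar} to be applied essentially directly.
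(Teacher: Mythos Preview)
The statement you are attempting is \emph{Conjecture}~\ref{conjKalaiSarkaria}, not a theorem: the paper does not prove it, and indeed presents Theorem~\ref{thmMissingFacePL} only as ``supporting evidence'' for it. The paper's actual claim (end of Section~\ref{sec:g-conj}) is far more modest: Theorem~\ref{thmMissingFacePL} verifies the weaker Conjecture~\ref{weakKSconj} in the single special case where $\K$ is the $d$-skeleton of a \emph{neighborly} PL $2d$-sphere together with one missing $d$-face. There is therefore no ``paper's own proof'' to compare against.

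Your proposal is, as you yourself concede, not a proof but a program with its central step missing. The gap you identify---extracting from an arbitrary face $\sigma\in\Delta(\K)\setminus\Delta(d,n)$ a subcomplex $\K_0$ of the specific form required by Theorem~\ref{thmMissingFacePL}---is genuine and, as far as anyone knows, wide open. Two further points are worth flagging. First, Theorem~\ref{thmMissingFacePL} produces witnesses of one very rigid shape (a PL $2k$-sphere skeleton plus a missing $k$-face); there is no reason to expect that an arbitrary shifting violation, especially one coming from a face $\sigma$ of dimension other than $\lfloor d/2\rfloor$, can be realized by such a witness, so even a successful extraction step would likely need a much broader family of obstruction complexes than the paper provides. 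Second, your ``intermediate test case'' $n=2d+3$, $|\sigma|=d+1$ is essentially the case the paper already handles via the neighborly hypothesis; it is precisely the step \emph{beyond} this threshold that is unknown. In short, what you have written is a reasonable description of why the conjecture is hard, but it is not a proof, and the paper makes no claim to one.
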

Equivalently, this conjecture states that embeddability of a
complex in the $(d-1)$-sphere implies that the sets
$T_d,...,T_{\lceil d/2\rceil}$, where
$T_{d-k}=\{k+2,k+3,...,d-k,d-k+2,d-k+3,...,d+2\},\  0\leq k\leq
\lfloor d/2\rfloor$, are \emph{not} in its shifting.

Due to Swartz' result mentioned above, in order to prove that the
$g$-conjecture holds for PL spheres, it is enough to prove the
following part of Conjecture \ref{conjKalaiSarkaria}:
\begin{conjecture}\label{weakKSconj}
If $\{d+3,d+4,...,2d+3\}\in \Delta^s(\K)$ \textup{(}equivalently, by shiftedness, $\Delta_{\leq d}^{2d+2}\subseteq \Delta^s(\K)$\textup{)} then $\K$ does not embed in the $2d$-sphere, where $\Delta^s$ denotes symmetric algebraic shifting.
\end{conjecture}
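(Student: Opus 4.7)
The plan is to argue by contrapositive: assume $\K$ embeds in $\s^{2d}$ and show that $\Delta^{2d+2}_{\leq d} \not\subseteq \Delta^s(\K)$. Since the embedding forces $\obstwod(\K) = 0$, the strategy is to use the shifting hypothesis to construct an explicit $2d$-cycle $\omega \in C_{2d}(\delprod{\K}/\Z_2; \Z_2)$ against which the obstruction pairs nontrivially, mirroring the classical van Kampen--Flores witness cycle but with the formal simplices of $\Delta^{2d+2}_{\leq d}$ replaced by $d$-chains in $\K$ itself.

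Concretely, I would first try to show that the hypothesis $\Delta^{2d+2}_{\leq d} \subseteq \Delta^s(\K)$, unpacked through the generic one-forms $\Theta$ defining symmetric shifting, yields a compatible family $\{B(T)\}_{T \in \binom{[2d+3]}{d+1}}$ of $d$-chains $B(T) \in C_d(\K; \Z_2)$ with $\partial B(T) = \sum_{v \in T} B(T\setminus\{v\})$, where the right-hand side uses inductively defined lower-dimensional chains. Granting this, the candidate cycle $\omega := \sum \{B(T), B(T')\}$, summed over disjoint pairs $T,T'$ of size $d+1$, should be a cycle in $\delprod{\K}/\Z_2$ by a boundary computation analogous to Step~3 in the proof of Theorem~\ref{thmMissingFacePL}. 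Then, for any general-position $f:\|\K\|\to \R^{2d}$, the intersection pairing $\varphi_f(\omega)$ should equal the van Kampen--Flores number, i.e.~$1 \pmod 2$, via a linking-invariance argument: vertex-disjoint pairs of $d$-chains with matched boundaries share linking numbers with the corresponding pair of simplices embedded in general position, essentially the content of the ``homological coning'' trick of Lemma~\ref{lem:homological-coning}.

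The hard step is the first one: symmetric shifting extracts its combinatorial content via initial-monomial tracking with respect to generic $\Theta$, and producing explicit geometric chains $B(T)$ inside $\K$ from this algebraic data is notoriously opaque. A more modest but potentially accessible intermediate goal, using the present paper's machinery, would be to combine Swartz' reduction with Theorem~\ref{thmMissingFacePL} to establish the conjecture in the PL-sphere setting, which would already suffice for the PL $g$-conjecture; the non-PL case would then require either a direct bistellar-free argument or a proof of Conjecture~\ref{conjMissingFace}.
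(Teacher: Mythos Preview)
The statement you are addressing is Conjecture~\ref{weakKSconj}, which the paper presents as an \emph{open problem}; there is no proof in the paper to compare against. The paper verifies it only in the very narrow case where $\K$ is the $d$-skeleton of a \emph{neighborly} PL $2d$-sphere together with one missing $d$-face, and even that requires the full strength of Theorem~\ref{thmMissingFacePL}.

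Your outline has a genuine gap beyond the one you flag. Even granting a family of chains $B(T)\in C_d(\K;\Z_2)$ with the correct boundaries, the expression $\{B(T),B(T')\}$ only makes sense as a chain in $\delprod{\K}/\Z_2$ when $\supp B(T)\cap\supp B(T')=\emptyset$. Nothing in the symmetric-shifting hypothesis gives you control over the supports of such chains, and without disjointness your candidate $\omega$ is not even a chain in the deleted product, let alone a cycle. In the paper, the analogous construction (the ``homological'' van Kampen cycle at the end of the proof of Theorem~\ref{thmMissingFacePL}) works only because Lemma~\ref{lem:homological-coning} guarantees that $v_0\bullet\rho$ meets $\complex{T}_0$ only in the vertices of $\rho$; that disjointness control comes from the strong geometric hypotheses (\ref{eq:assumption-2a}) and (\ref{eq:assumption-2b}), which in turn rely on Alexander duality inside an ambient PL sphere. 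For an arbitrary $\K$ satisfying only $\Delta^{2d+2}_{\leq d}\subseteq\Delta^s(\K)$ there is no such ambient structure available.

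Your ``more modest'' fallback also overshoots. Swartz' reduction requires, for the PL $g$-conjecture, that $\{d+3,\ldots,2d+3\}\notin\Delta^s(\complex{S})$ for \emph{every} PL $2d$-sphere $\complex{S}$. Theorem~\ref{thmMissingFacePL} only shows non-embeddability for complexes of the specific form $\complex{S}_{\leq d}\cup\{M\}$; it says nothing about the shifting of $\complex{S}$ itself unless $\complex{S}$ is neighborly (so that adding one missing $d$-face is exactly what pushes the shifting over the threshold). The gap between ``neighborly PL $2d$-spheres'' and ``all PL $2d$-spheres'' is precisely where the PL $g$-conjecture lives, and the present paper's methods do not close it.
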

Indeed, if $\K$ is a triangulation of a $2d$-sphere then, by the Dehn-Sommerville equations, $\omega: H_d\longrightarrow H_{d+1}$ is an isomorphism iff it is onto, which happens iff when choosing a basis for $\R[\K]_{d+1}$ from the projection of the monomials of degree $d+1$ in generic variables $\theta_1,...,\theta_n\in A_1$, lexicographically in the greedy way, all monomials in this basis involve a variable from the set $\{\theta_1,...,\theta_{2d+2}\}$. By shiftedness, this happens iff $\theta_{2d+1}^{d+1}$ is not in this basis, which in turn happens iff $\{d+3,d+4,...,2d+3\}\notin \Delta^s(\K)$, which happens iff
$\Delta_d^{2d+2}\nsubseteq \Delta^s(\K)$.

Note that for the special case where the triangulated $2d$-sphere is
neighborly, Conjecture \ref{conjMissingFace} follows from Conjecture \ref{weakKSconj}.
Conjecture \ref{weakKSconj} is trivial for $d=0$ and known for $d=1$, where the assumption actually implies that $\K_{\leq 1}$ contains the complete graph on $5$ vertices as a minor \cite[Theorem 1.2]{Nevo:EmbeddabilityStresses}. Theorem \ref{thmMissingFacePL} verifies Conjecture \ref{weakKSconj} in the case where $\K$ is the $d$-skeleton of a neighborly PL $2d$-sphere union with a missing $d$-face. Indeed, in this case the assumption holds
as the number of $(d+1)$-tuples on $\K_0=[n]$, where $n\geq 2d+3$, which are not lexicographically greater or equal to $\{d+3,...,2d+3\}$ is only $|\Delta(2d+1,n)_d|$ while $\Delta(\K)$ is shifted with $|\Delta(\K)_d|=1+|\Delta(2d+1,n)_d|$;
 and the conclusion holds by Theorem \ref{thmMissingFacePL}.

\begin{remark}
In the special case where the neighborly PL $2d$-sphere is the boundary of the cyclic $(2d+1)$-polytope, one can use the notion of \emph{higher minors}, introduced in \cite{Nevo:HigherMinors}, to show that $\K$ contains $\Delta_{\leq d}^{2d+2}$ as a minor, and hence by \cite{Nevo:HigherMinors}, Corollary 1.2, $\K$ is
not embeddable in the $2d$-sphere.
\end{remark}

\bibliographystyle{amsplain}
\bibliography{topology,ubt}

\end{document}